\documentclass[reqno,11pt]{amsart}
\usepackage{amsmath,amssymb,amsfonts,amsthm, amscd,indentfirst}
\usepackage{amsmath,latexsym,amssymb,amsmath,
	amscd,amsthm,amsxtra,mathrsfs}
 
\usepackage[top=3cm, bottom=3.5cm, right=2.5cm, left=2.2cm]{geometry}

\usepackage[hyphens]{url} 
\usepackage{hyperref}
\usepackage{bookmark}
\usepackage{mathtools}
\usepackage{etoolbox}
\allowdisplaybreaks 
\mathtoolsset{showonlyrefs=true}
\DeclareRobustCommand{\textsection}{\ifmmode\mathsection\else\S\fi}

\usepackage[
  backend=biber,
  style=alphabetic,
  sorting=nyt,
  maxbibnames=99,
  giveninits=true,
]{biblatex}
\renewbibmacro{in:}{}

\DeclareFieldFormat[article,inproceedings,incollection,unpublished]{title}{#1}
\DeclareFieldFormat[article,inproceedings,incollection,unpublished]{citetitle}{#1}

\DeclareSourcemap{%
  \maps[datatype=bibtex]{%
    \map{\step[fieldsource=shortjournal, fieldtarget=journaltitle]}%
  }%
}

\makeatletter
\newcommand*\bbx@lasthash{}
\newtoggle{bbx@dashed}
\AtBeginBibliography{\global\let\bbx@lasthash\empty}
\AtEveryBibitem{%
  \global\togglefalse{bbx@dashed}%
  \iffieldundef{fullhash}
    {\global\let\bbx@lasthash\empty}
    {\iffieldequalstr{fullhash}{\bbx@lasthash}{\global\toggletrue{bbx@dashed}}{}%
     \xdef\bbx@lasthash{\thefield{fullhash}}}%
}
\DeclareNameFormat{labelname}{%
  \ifboolexpr{togl {bbx@dashed}}
    {\ifnum\value{listcount}=1\relax\bibnamedash\fi}
    {\nameparts{#1}%
     \usebibmacro{name:family-given}
       {\namepartfamily}
       {\namepartgiven}
       {\namepartprefix}
       {\namepartsuffix}%
     \usebibmacro{name:andothers}}%
}
\makeatother

\newcounter{mparcnt}

\usepackage{fancyhdr}
\usepackage{esint}
\usepackage{enumerate}
\usepackage{xcolor}
\usepackage{pictexwd,dcpic}
\usepackage{graphicx}
\usepackage{caption}
\usepackage{graphicx}
\usepackage{caption}
\usepackage{slashed}
\usepackage{epsfig,here}
\usepackage{subfigure,here}

\newtheorem{theorem}{Theorem}[section]
\newtheorem{lemma}[theorem]{Lemma}
\newtheorem{proposition}[theorem]{Proposition}
\newtheorem{corollary}[theorem]{Corollary}

\newcommand{\abs}[1]{\lvert#1\rvert}
\newcommand{\Abs}[1]{\left\lvert#1\right\rvert}

\newcommand{\norm}[1]{\lVert#1\rVert}

\newcommand{\rd}{{\rm d}}
\newcommand{\rdV}{{\rm dV}}
\newcommand{\rVol}{{\rm Vol}}

\newcommand{\rid}{{\rm id}}
\newcommand{\D}{{\slashed{D}}}

\newcommand{\curl}{{\rm curl}\,}
\newcommand{\ip}{\lrcorner\,}
\newcommand{\w}{\wedge}
\def\<{\langle}
\def\>{\rangle}
\def\S{\mathbb{S}}
\def\R{\mathbb{R}}

\newcommand{\ra}{\rightarrow}

\newcommand{\KN}{\mathbin{\bigcirc\mspace{-15mu}\wedge\mspace{3mu}}}

\newcommand{\eq}[1]{\begin{equation}\allowdisplaybreaks\begin{alignedat}{2} #1 \end{alignedat}\end{equation}}

\numberwithin{equation} {section}
\begin{document}
\subjclass[2020]{ 53A30, 58J10, 35J60}
\keywords{curl operator, differential forms, conformal invariants, Sobolev inequalities, Yamabe-type problems}
\title[The curl--Yamabe problem]{The curl--Yamabe problem}
\date{\today}
\author{Guofang Wang}\address{ Albert-Ludwigs-Universit\"at Freiburg,
Mathematisches Institut,
Ernst-Zermelo-Str. 1,
D-79104 Freiburg, Germany}
\email{guofang.wang@math.uni-freiburg.de}

\author{Mingwei Zhang}\address{ Wuhan University, School of Mathematics and Statistics, 430072 Wuhan, China and 
Albert-Ludwigs-Universit\"at Freiburg,
Mathematisches Institut,
Ernst-Zermelo-Str. 1,
D-79104 Freiburg, Germany}
\email{zhangmwmath@whu.edu.cn}

\begin{abstract}
We introduce and study a Yamabe-type variational problem associated with the curl operator on middle-degree forms on a closed oriented Riemannian manifold $(M^n,g)$ with $n\equiv3\pmod4$. The corresponding curl--Yamabe constant is defined by minimizing a conformally invariant, gauge-invariant quotient built from $\norm{\curl\alpha}_{\frac{2n}{n+1}}$ and $\int_M\langle\curl\alpha,\alpha\rangle$, and its Euler--Lagrange equation is the critical curl--Yamabe equation $\curl\alpha=\frac{n+1}{2}|\alpha|^{\frac{2}{n-1}}\alpha$. Using sharp curl--Sobolev inequalities on the sphere and an Aubin-type cut-off construction, we prove that $0<Y_{\rm curl}(M,[g])\le Y_{\rm curl}(\S^n)$ for every such manifold, and that the infimum is attained whenever $Y_{\rm curl}(M,[g])<Y_{\rm curl}(\S^n)$. Moreover, for $n>3$ we establish the strict inequality for manifolds that are not locally conformally flat, via a test-form expansion detecting the Weyl tensor, and deduce existence of a minimizer solving the curl--Yamabe equation.
\end{abstract}

\maketitle
\tableofcontents

\section{Introduction}

Let $(M^n,g)$ be a (connected) closed oriented Riemannian manifold with $n\equiv 3\pmod 4$, and let $\alpha$ be an $\frac{n-1}{2}$-form on $M$. Set
\eq{\label{eq:def}
    J(\alpha) \coloneqq J_g(\alpha) \coloneqq \frac{ \Big(\int_{M} \abs{\curl\alpha}^{\frac{2n}{n+1}} \,\rdV_g \Big)^{\frac{n+1}{n}} }{ \int_{M} \< \curl\alpha, \alpha\> \,\rdV_g },
}
Define the curl--Yamabe constant
\eq{\label{eq:inf}
    Y_{{\rm curl}}(M,[g]) :=\inf _{\int _M  \< \curl \alpha, \alpha\>>0}J(\alpha),
}
which is conformally invariant. The functional $J$ is also gauge invariant:
\eq{\label{eq:gauge}
    J(\alpha+\gamma) = J(\alpha)
}
for any closed $\gamma$. After choosing a suitable representative and rescaling, we may assume that a minimizer $\alpha$ (if it exists) solves the Euler--Lagrange equation
\eq{\label{eq:Yamabe}
    \curl\alpha = \frac{n+1}{2}\abs{\alpha}^{\frac{2}{n-1}}\alpha,
}
which is called a curl--Yamabe equation, see Appendix~\ref{appendix_regularity} for details. Analogously to the Yamabe problem, we ask the following Yamabe problem related to the curl operator: 

\medskip

\noindent{\bf Curl-Yamabe problem}. {\it Does every closed oriented manifold of dimension $n\equiv3 \pmod 4$ admit a nontrivial solution to \eqref{eq:Yamabe}?}

\medskip

In the case $(M,g)=(\S^n,g_{{\rm st}})$, our previous work \cite{WZ26curl} proved that
\eq{
    Y_{{\rm curl}}(\S^n) = \frac{n+1}{2}\omega_n^{\frac{1}{n}},
}
where $\omega_n\coloneqq\abs{\S^n}$. Moreover, there is a family of minimizers, which will be used in the paper later.  We now state our main results.

\begin{theorem}\label{thm:main_1}
Let $(M^n,g)$ be a closed oriented Riemannian manifold with $n\equiv 3\pmod 4$. We have
\eq{\label{eq:main_1}
    0<Y_{{\rm curl}}(M,[g]) \leq Y_{{\rm curl}}(\S^n).
}
\end{theorem}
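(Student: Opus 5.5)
Two inequalities have to be proved: positivity of $Y_{\rm curl}(M,[g])$ and the upper bound $Y_{\rm curl}(\S^n)$. Set $k=\frac{n-1}{2}$, $p=\frac{2n}{n+1}$ and $\beta=\curl\alpha$, so $\beta=*d\alpha$ is a co-closed exact $k$-form (here $n\equiv 3\pmod 4$ is what makes $\curl$ self-adjoint).

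\textbf{Positivity.} I would prove a curl--Sobolev inequality on $M$:
\begin{equation}
\Big|\int_M\<\curl\alpha,\alpha\>\,\rdV_g\Big|\le C\,\norm{\curl\alpha}_{p}^{2}.
\end{equation}
This gives $J\ge 1/C>0$.
\begin{enumerate}
\item By gauge invariance \eqref{eq:gauge}, replace $\alpha$ by its Hodge-projected representative $\tilde\alpha$. This removes the harmonic part and the exact part, so $\tilde\alpha=\delta G\,d\alpha$ with $G$ the Green operator of the Hodge Laplacian. Closed forms are orthogonal to $\curl\alpha$, so $\int\<\curl\alpha,\alpha\>$ is unchanged.
\item Then $\tilde\alpha=\pm *^{-1}\delta G(\curl\alpha)$ up to sign, i.e. $\tilde\alpha$ is obtained from $\beta$ by an elliptic pseudodifferential operator of order $-1$.
\item By the Calder\'on--Zygmund estimate followed by Sobolev embedding $W^{1,p}\hookrightarrow L^{p'}$ with $p'=\frac{2n}{n-1}$,
\begin{equation}
\norm{\tilde\alpha}_{p'}\le C\norm{\beta}_p.
\end{equation}
\item H\"older's inequality gives $\int\<\beta,\tilde\alpha\>\le\norm{\beta}_p\norm{\tilde\alpha}_{p'}\le C\norm{\beta}_p^2$.
\end{enumerate}
This step is routine.

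\textbf{Upper bound.} I would use an Aubin-type concentration argument. Conformal invariance lets me choose $g$ and then transplant an extremal of the sphere.
\begin{enumerate}
\item Fix $x_0\in M$ and normal coordinates on $B_{2\rho}(x_0)$, so that $g_{ij}=\delta_{ij}+O(|x|^2)$.
\item Stereographic projection (conformal invariance) turns the minimizers on $\S^n$ from \cite{WZ26curl} into Euclidean extremals $\alpha_\varepsilon(x)=\varepsilon^{-?}\alpha_1(x/\varepsilon)$ for the flat quotient, with value $Y_{\rm curl}(\S^n)$.
\item Guided by \eqref{eq:Yamabe}, I expect $|\alpha_1|\sim|x|^{-(n-1)}$ and $|\curl\alpha_1|\sim|x|^{-n}$ at infinity. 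In that case the tails of $|\curl\alpha_1|^p$ and $\<\curl\alpha_1,\alpha_1\>$ outside $B_{\rho/\varepsilon}$ are $O(\varepsilon^{n})$ and $O(\varepsilon^{n-1})$ respectively.
\item Define the test form $\eta\,\alpha_\varepsilon$ with a cut-off $\eta$ supported in $B_{2\rho}$.
\end{enumerate}
The cut-off must be placed carefully: $\curl(\eta\alpha)=\eta\curl\alpha+*(d\eta\wedge\alpha)$. Because of gauge freedom it is better to cut off a primitive normalized to decay fastest; equivalently, choose the gauge in which $\alpha_1$ has the decay above.

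\textbf{Main obstacle.} The hard step is controlling the cross term $\int\<*(d\eta\wedge\alpha_\varepsilon),\alpha_\varepsilon\>$ and the metric errors, and showing both are $o(1)$. The metric errors come from the $O(|x|^2)$ terms in $g$, in $*_g$, and in $\rdV_g$. The cut-off term is bounded by $\rho^{-1}\int_{B_{2\rho}\setminus B_\rho}|\alpha_\varepsilon|^2$, which is $O(\varepsilon^{n-2})$ under the decay above; this is small when $n\ge3$, marginal at $n=3$ but still $O(\varepsilon)$. Once these are controlled, $J_g(\eta\alpha_\varepsilon)\to Y_{\rm curl}(\S^n)$ as $\varepsilon\to0$, which gives $Y_{\rm curl}(M,[g])\le Y_{\rm curl}(\S^n)$. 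The delicate point to verify is the decay of the explicit extremals; I would check it directly from the formulas in \cite{WZ26curl}.
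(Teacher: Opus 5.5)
Your proposal is correct and follows essentially the paper's route. Positivity comes from gauge-fixing to the co-exact part, an $L^{\frac{2n}{n+1}}$ elliptic bound, then Sobolev and H\"older; the paper obtains $\norm{\alpha}_{W^{1,\frac{2n}{n+1}}}\lesssim\norm{\rd\alpha}_{L^{\frac{2n}{n+1}}}$ from Gaffney's inequality plus a compactness argument rather than from the Green operator. The upper bound comes from an Aubin-type cut-off of $v_\varepsilon=\varepsilon^{-\frac{n-1}{2}}v_0(\cdot/\varepsilon)$, where $v_0$ is the stereographic pullback of a unit-length Killing form.

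Two simplifications you missed. First, since $\frac{n-1}{2}$ is odd, $\eta v_\varepsilon\wedge\rd(\eta v_\varepsilon)=\eta^2 v_\varepsilon\wedge\rd v_\varepsilon$, so the cross term you call the main obstacle vanishes identically; the denominator is also metric-independent, so only the numerator needs estimates. Second, the decay you defer follows at once from $\abs{\alpha_0}\equiv1$ and conformal covariance: $\abs{v_0}_\delta=\big(\frac{2}{1+\abs{y}^2}\big)^{\frac{n-1}{2}}$ and $\abs{\rd v_0}_\delta=\frac{n+1}{2}\big(\frac{2}{1+\abs{y}^2}\big)^{\frac{n+1}{2}}$, so $\abs{\curl v_0}\sim\abs{y}^{-(n+1)}$ rather than $\abs{y}^{-n}$ (harmless for your estimates).
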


\begin{theorem}\label{thm:main_2}
Let $(M^n,g)$ be a closed oriented Riemannian manifold with $n\equiv 3\pmod 4$. If
\eq{\label{eq:strict}
    Y_{{\rm curl}}(M,[g]) < Y_{{\rm curl}}(\S^n),
}
then the infimum \eqref{eq:inf} is attained by some $\alpha\in C^{0,\tau}(M) \cap W^{1,2}(M)\cap C^\infty(\{\alpha\neq0\})$ for some $\tau>0$.
\end{theorem}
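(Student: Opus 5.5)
The plan is to recast the minimization in terms of $\beta\coloneqq\curl\alpha$, which removes the gauge freedom \eqref{eq:gauge}, and then run a concentration-compactness argument in the spirit of Lions and Aubin. Let $p=\frac{2n}{n+1}$ and $p'=\frac{2n}{n-1}$. On $M$, take $\alpha$ co-closed and $L^2$-orthogonal to harmonic forms. Then $\alpha=\curl^{-1}\beta$ for $\beta$ in the range of $\curl$, namely the co-exact $\frac{n-1}{2}$-forms. Elliptic ($L^p$ Hodge) theory gives the bound $\norm{\alpha}_{W^{1,p}}\le C\norm{\beta}_p$, and Sobolev embedding $W^{1,p}\hookrightarrow L^{p'}$ makes $\alpha\mapsto$ compact into $L^q$ for every $q<p'$. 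Normalize a minimizing sequence by $\norm{\beta_k}_p=1$, so that $Q(\beta_k)\coloneqq\int\<\beta_k,\curl^{-1}\beta_k\>\to 1/Y$, where $Y=Y_{\rm curl}(M,[g])$. The lower bound $Y>0$ from Theorem~\ref{thm:main_1} keeps this limit finite. Passing to a subsequence, $\beta_k\rightharpoonup\beta$ weakly in $L^p$ and $\alpha_k\to\alpha$ strongly in $L^q$ for $q<p'$.

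The key step is a local curl--Sobolev inequality with almost sharp constant: for every $\varepsilon>0$ there exists $C_\varepsilon$ such that
\[
\Big|\int\<\curl\alpha,\alpha\>\Big|\le \big(Y_{\rm curl}(\S^n)^{-1}+\varepsilon\big)\norm{\curl\alpha}_p^{2}+C_\varepsilon\norm{\alpha}_{q}^{2}.
\]
I would prove this by a partition of unity $\{\eta_i^2\}$ subordinate to small balls on which $g$ is nearly Euclidean. On each ball, apply the sharp inequality on $\R^n\cong\S^n$ from \cite{WZ26curl} to $\eta_i\alpha$. The error terms $d\eta_i\wedge\alpha$ are lower order. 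The bilinear structure, $\sum_i\int\eta_i^2\<\curl\alpha,\alpha\>$ versus $\sum_i\norm{\eta_i\curl\alpha}_p^2$, is then combined using $\sum_i\norm{\eta_i\beta}_p^2\le\norm{\beta}_p^2$ up to overlap factors. To control those overlap factors I would use H\"older with $\sum_i\eta_i^p\le 1$, where one has to choose $\eta_i^{2}$ versus $\eta_i^{p}$ carefully. This step feels delicate and I expect it to be the main technical point.

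With the local inequality in hand, apply it to $\alpha_k-\alpha$, with $\beta_k-\beta\rightharpoonup0$. By a Brezis--Lieb type splitting, $Q(\beta_k)=Q(\beta)+Q(\beta_k-\beta)+o(1)$, because the cross terms $\int\<\beta_k-\beta,\alpha\>$ tend to zero by weak convergence. Combine this with $\norm{\beta_k}_p^p=\norm{\beta}_p^p+\norm{\beta_k-\beta}_p^p+o(1)$, which requires a.e.\ convergence of $\beta_k$. Obtaining that convergence is the real obstacle. I would first try to bypass it using the concavity inequality $a^{2/p}+b^{2/p}\ge(a+b)^{2/p}$, which is valid since $2/p>1$ reverses appropriately; concretely, work with $t=\limsup\norm{\beta_k-\beta}_p^2$ and $s=\norm{\beta}_p^2$, noting that weak lower semicontinuity only gives $s\le1$. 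Failing that, I would use Ekeland's principle to get Palais--Smale sequences satisfying the approximate equation $|\beta_k|^{p-2}\beta_k\approx \lambda_k\alpha_k$, and derive strong local convergence away from concentration points from it. Granting the splitting, we get $1/Y\le Q(\beta)+(Y_{\rm curl}(\S^n)^{-1}+\varepsilon)\,t$ with $s+t\le1$ in the appropriate $L^p$ sense, together with $Q(\beta)\le s/Y$. Since $Y<Y_{\rm curl}(\S^n)$, this forces $t=0$. Hence $\beta_k\to\beta$ strongly, $\beta\neq0$, and $\beta$ attains the infimum.

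For the regularity statement, the minimizer satisfies \eqref{eq:Yamabe} after rescaling, by the appendix. Writing the equation as $d\alpha=c\,\ast(|\alpha|^{\frac{2}{n-1}}\alpha)$ with $d^\ast\alpha=0$ gives a first-order elliptic system with critical nonlinearity. Brezis--Kato/Moser iteration, using $|\alpha|^{2/(n-1)}\in L^{n}$ and a smallness argument, yields $\alpha\in L^r$ for all $r<\infty$. Then $W^{1,r}$ regularity gives $C^{0,\tau}$ and $W^{1,2}$. On $\{\alpha\ne0\}$ the nonlinearity is smooth, so bootstrapping gives $C^\infty$ there.
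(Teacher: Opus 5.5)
Your almost-sharp local inequality is sound, and easier than you expect. With $\sum_i\eta_i^2=1$ the quadratic side is exactly additive, because $\eta_i\alpha\w\rd(\eta_i\alpha)=\eta_i^2\,\alpha\w\rd\alpha$. The reverse Minkowski inequality for nonnegative functions in $L^{p/2}$, with $p/2=\frac{n}{n+1}<1$, then gives $\sum_i\norm{\eta_i\beta}_p^2\le\norm{\beta}_p^2$ with no overlap loss.

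\textbf{The gap in the existence argument.} The genuine gap is the step you ``grant''. For a sequence that is only weakly convergent in $L^p$ with $p<2$, there is no inequality $\norm{\beta}_p^2+\limsup_k\norm{\beta_k-\beta}_p^2\le\lim_k\norm{\beta_k}_p^2$. In the scalar model $\beta\equiv1$, $\beta_k=1+r_k$ with $r_k$ Rademacher on a probability space, the left side is $2$ and the right side is $2^{2-2/p}<2$. A diffuse defect of this kind adds nothing to the positive part of $\lim_k\int\langle\curl\gamma_k,\gamma_k\rangle$. Your global inequality nevertheless charges it in full through $\norm{\curl\gamma_k}_p^2$, so $1/Y\le s/Y+(Y_{\rm curl}(\S^n)^{-1}+\varepsilon)t$ does not force $t=0$.

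Neither of your bypasses repairs this:
\begin{enumerate}
\item For $2/p=\frac{n+1}{n}>1$, the valid inequality is $(a+b)^{2/p}\ge a^{2/p}+b^{2/p}$, the reverse of what you wrote. In either direction it cannot replace the missing splitting.
\item Testing a Palais--Smale relation with $\psi\gamma_k$ yields $\int\psi\langle\abs{\beta_k}^{p-2}\beta_k-\abs{\beta}^{p-2}\beta,\beta_k-\beta\rangle\to\lambda\int\psi\,\rd\nu$, where $\nu$ is the weak-$*$ limit of $\gamma_k\w\rd\gamma_k$. So you would still need to know that $\nu$ lives on a countable set.
\end{enumerate}

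That atomicity is the missing idea, and the paper obtains it directly, with no a.e.\ convergence. Applying the sharp inequality to $\eta\gamma_k$, with $\rd\eta\w\gamma_k\to0$ in $L^p$, gives $Y_{\rm curl}(\S^n)\int\eta^2\,\rd\nu\le(1+o_\rho(1))\big(\int\abs{\eta}^p\,\rd\mu\big)^{\frac{n+1}{n}}$, where $\mu$ is the weak-$*$ limit of $\abs{\rd\gamma_k}^p\rdV_g$. Splitting the atomless part of $\mu$ into pieces of mass at most $\tau$ and using superadditivity of $t\mapsto t^{\frac{n+1}{n}}$ shows that $\nu^+$ is purely atomic, with $Y_{\rm curl}(\S^n)\nu_j\le\mu_j^{\frac{n+1}{n}}$ (Lemma~\ref{lem4.2}). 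On the numerator side, only $\theta\ge\abs{\rd\alpha}^p\rdV_g+\sum_j\mu_j\delta_{x_j}$ is needed, where $\theta$ is the weak-$*$ limit of $\abs{\rd\alpha_k}^p\rdV_g$. This follows from weak lower semicontinuity plus $\theta(\{x_j\})=\mu_j$. Together with $1=\int\alpha\w\rd\alpha+\nu(M)$ and superadditivity, it closes the argument.

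\textbf{The regularity sketch.} This part rests on a false premise. The form solving \eqref{eq:Yamabe} is $c\abs{\curl\eta}^{-\frac{2}{n+1}}\curl\eta$ for the minimizer $\eta$; it is not a gauge representative. Applying $\rd$ to $\rd\alpha=\frac{n+1}{2}*(\abs{\alpha}^{\frac{2}{n-1}}\alpha)$ gives $\rd^*(\abs{\alpha}^{\frac{2}{n-1}}\alpha)=0$, not $\rd^*\alpha=0$. Indeed, the extremals $v_V$ of Section~\ref{sec5.2} solve \eqref{eq:Yamabe} on $(\R^n,\delta)$ but are not co-closed.

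The exact part of $\alpha$ is therefore governed by a degenerate $p$-Laplace type system with $p=\frac{2n}{n-1}$, and Brezis--Kato iteration plus linear $W^{1,r}$ estimates give no control over it. This is why the theorem only asserts $C^{0,\tau}\cap W^{1,2}$ and smoothness on $\{\alpha\neq0\}$. It is also why the appendix uses a homotopy decomposition with $p$-harmonic replacement and Campanato estimates, then an $\epsilon$-regularized functional with the Weitzenb\"ock formula for the $W^{1,2}$ bound.
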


\begin{theorem}\label{thm:main_3}
Let $(M^n,g)$ be a closed oriented Riemannian manifold with $n\equiv 3\pmod 4$ and $n>3$. If $(M^n,g)$ is not locally conformally flat, then
\eq{
    Y_{{\rm curl}}(M,[g]) < Y_{{\rm curl}}(\S^n).
}
In particular, the infimum \eqref{eq:inf} is attained by some $\alpha\in C^{0,\tau}(M) \cap W^{1,2}(M)\cap C^\infty(\{\alpha\neq0\})$ for some $\tau>0$.
\end{theorem}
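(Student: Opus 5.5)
The plan follows Aubin's strategy for the Yamabe problem. Since $Y_{\rm curl}$ and $J$ are conformally invariant, we may replace $g$ by a conformal metric in conformal normal coordinates (Lee--Parker) around a point $p$ with $W_g(p)\neq0$. In this gauge $\det g=1+O(r^N)$ near $p$. The Ricci tensor vanishes at $p$, its symmetrized derivatives vanish, and $R_g=O(r^2)$ with $\Delta R_g(p)=-\frac16|W(p)|^2$. The expansion of $g_{ij}$ is then
\[
g_{ij}=\delta_{ij}+\tfrac13 R_{ikjl}x^kx^l+O(r^3),
\]
and the quadratic terms that survive in integrals after angular averaging are governed by $|W(p)|^2$. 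By Theorem~\ref{thm:main_2}, strict inequality immediately gives existence of the minimizer, so the whole task is the strict inequality.

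As test forms I take the extremals on $\S^n$ from \cite{WZ26curl}, transplanted to $\R^n$ by stereographic projection. Curl on $\frac{n-1}{2}$-forms, i.e.\ $*d$, is conformally covariant in the middle degree, so the flat bubbles $\alpha_\epsilon(x)=\epsilon^{-\frac{n-1}{2}}\,\alpha_1(x/\epsilon)$ satisfy $\curl\alpha_\epsilon=\frac{n+1}{2}|\alpha_\epsilon|^{\frac2{n-1}}\alpha_\epsilon$ and $|\alpha_1|\sim(1+|x|^2)^{-\frac{n-1}{2}}$. Up to the conventions of \cite{WZ26curl}, the form looks like $\alpha_1\approx(1+|x|^2)^{-\frac{n+1}{2}}$ times a contraction of $x$ and $dx$ into a self-dual-type combination. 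I would cut off with $\eta(r)$ supported in $B_{2\delta}$, set $\phi_\epsilon=\eta\alpha_\epsilon$, and compute with $g$-dependent Hodge star. The cut-off errors are $O(\epsilon^{n-1})$ or smaller, which is negligible against the $\epsilon^4$ gain expected when $n\ge 7$. I also need $n>3$ for this, and the condition $n\equiv 3\pmod 4$ gives $n\ge7$ here.

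The next step is to expand
\[
N(\epsilon)=\Big(\int|*_g d\phi_\epsilon|_g^{\frac{2n}{n+1}}\Big)^{\frac{n+1}{n}},\qquad D(\epsilon)=\int\langle *_gd\phi_\epsilon,\phi_\epsilon\rangle_g=\int d\phi_\epsilon\wedge\phi_\epsilon .
\]
The key observation is that $D$ is metric independent (up to sign convention), so $D(\epsilon)=D_0+O(\epsilon^{n-1})$. All the metric dependence sits in $N$, through $|d\phi|_g^2$ and $dV_g=dx(1+O(r^N))$. I would write
\[
|\omega|_g^2=|\omega|^2+h^{ij}\,(\text{contractions})+O(|h|^2),\qquad h=g-\delta,
\]
and expand $\int|d\alpha_\epsilon|_g^{\frac{2n}{n+1}}$. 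The linear term in $h$ at order $r^2$ is $\tfrac13R_{ikjl}x^kx^l$. Integrated against the quadratic form built from $d\alpha_1\otimes d\alpha_1$, it vanishes after angular averaging by the trace-freeness of $W$ and $\mathrm{Ric}(p)=0$. Odd terms vanish by symmetry. One must therefore go to order $\epsilon^4$. There three kinds of contribution enter: the quartic coefficient of $g_{ij}$, the square of the quadratic term, and the second-order term in the expansion of $t\mapsto t^{\frac{n}{n+1}}$. The aim is $N(\epsilon)=N_0-c_n|W(p)|^2\epsilon^4+o(\epsilon^4)$ with $c_n>0$ and $N_0/D_0=Y_{\rm curl}(\S^n)$. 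This gives $J(\phi_\epsilon)<Y_{\rm curl}(\S^n)$ for small $\epsilon$.

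The main obstacle is this fourth-order computation and showing the sign of $c_n$. Unlike the scalar case, the bubble is not radial: $d\alpha_1$ has a nontrivial angular structure. So the averages $\int_{S^{n-1}}x^ix^jx^kx^l\,(d\alpha_1)_{i\cdots}(d\alpha_1)_{j\cdots}$ involve $W$ contracted with the form, not just $|x|^4$. I would first try to exploit freedom in the extremal family: choose the one whose constant-coefficient part lies in a convenient orbit, average over rotations of the extremal family, or equivalently over $SO(n)$ acting on the bubble. The average of $J$ over that orbit bounds the infimum, and the averaged expression reduces to $SO(n)$-invariant contractions, necessarily a multiple of $|W|^2$. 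The remaining problem is computing that constant and checking it is negative, via radial integrals of Beta type (convergent for $n\ge7$). If the averaged linear-in-$W$ terms from the quartic coefficients (which involve $\nabla^2 W$ and $R*R$ in Lee--Parker normal coordinates) do not combine cleanly, the fallback is a modified test form. I would add to $\alpha_\epsilon$ a correction $\epsilon^2\beta$ with $\beta$ quadratic in $W\cdot x$ and optimize over it. This yields a strictly negative $\epsilon^4$ coefficient by a second-variation argument, because the sphere extremal is nondegenerate modulo the symmetry group.
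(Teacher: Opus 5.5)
Your proposal has a genuine gap, and it sits at a step you treat as routine. You claim that the order-$\epsilon^2$ contribution of the quadratic part of $g-\delta$ (which is $W_{ikjl}(p)x^kx^l$ up to a constant) to $\int\abs{\rd\phi_\epsilon}_g^{\frac{2n}{n+1}}$ vanishes after angular averaging, by trace-freeness of $W$. That argument needs the moment tensor $T^{ijkl}=\int_{\R^n}\abs{\rd\alpha_1}^{-\frac{2}{n+1}}(\rd\alpha_1\cdot\rd\alpha_1)^{ij}x^kx^l\,\rd x$ to be isotropic. This holds for the radial scalar bubble but fails here: the non-radiality you only invoke at fourth order already matters at second order. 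Indeed $\rd v_0=\frac{n+1}{2}\cdot\frac{2}{1+\abs{y}^2}*_\delta v_0$ and $(*v\cdot*v)_{ij}=\abs{v}^2\delta_{ij}-(v\cdot v)_{ij}$. Trace-freeness kills only the $\delta_{ij}$ part, and one gets $J_{\tilde g}(\beta_\epsilon)=Y_{\rm curl}(\S^n)\big(1-\frac{c_W(v_0)}{3\omega_n}\epsilon^2+o(\epsilon^2)\big)$ with $c_W(v_0)=W_{ikjl}(p)\int\frac{2}{1+\abs{y}^2}(v_0\cdot v_0)^{ij}y^ky^l\,\rd y$, which is generically nonzero.

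For the extremal $v_V$ attached to an oriented $\frac{n-1}{2}$-plane $V$ (built from $\Omega_V=e^\flat\w\omega_V\pm*_{\R^n}\omega_V$ on $\R^{n+1}$), the paper computes $c_W(v_V)=\frac{6\omega_n}{n(n+1)}W(P_V,P_V)$. It then shows algebraically that $W(p)\neq0$ forces $W(P_V,P_V)>0$ for some $V$, building $V$ inductively from top eigenvectors of $A_k(X,Y)=\sum_{a\le k}W(e_a,X,e_a,Y)$. This first-order, linear-in-$W$ gain is the whole proof. It needs only $\tilde g_{ij}=\delta_{ij}-\frac13W_{ikjl}y^ky^l+O(\abs{y}^3)$, $\rdV=(1+O(\abs{y}^3))\rd y$, and cut-off errors that are $o(\epsilon^2)$ for $n\ge7$.

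Because you discard this term, your argument stops where you say the real difficulty begins, and it is not completed. Averaging $J$ over the $SO(n)$-orbit of the bubble makes things worse. The averaged $\epsilon^2$ coefficient is $W$ contracted with an $SO(n)$-invariant $4$-tensor, i.e.\ a combination of products of Kronecker deltas, hence zero. So averaging throws away exactly the term that produces the gain. What remains is an $\epsilon^4$ constant, built from the quartic Taylor coefficients of $g$, squares of the quadratic ones, and the second-order term of $t\mapsto t^{\frac{n}{n+1}}$. You explicitly leave its sign undetermined.

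The fallback does not supply that sign. Optimizing $A+2L(\beta)+Q(\beta)$ over a correction $\epsilon^2\beta$ with $Q\ge0$ lowers $A$ by $LQ^{-1}L$, but says nothing about the sign of $A$. Moreover, the nondegeneracy of the spherical extremal modulo symmetries (and modulo the infinite-dimensional gauge $\ker\rd$) is unproved.

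The fix is not to average: compute the $\epsilon^2$ coefficient for an explicit family of extremals and choose the member, i.e.\ the plane $V$, that gives it the favorable sign. Your averaging idea does help at this level, in a different role. Since the orbit average of the $\epsilon^2$ coefficient is zero, it suffices to show the coefficient is not identically zero on the orbit. That is exactly what the formula $c_W(v_V)\propto W(P_V,P_V)$ together with the nonvanishing lemma delivers.
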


We remark that in the case $n\equiv1\pmod4$, $n\geq5$, one can instead define the curl operator by $\curl\coloneqq i\ast\rd$, where $i$ is the imaginary unit, in order to preserve self-adjointness. Using the same argument, one can analogously obtain the same results as Theorem~\ref{thm:main_1}, Theorem~\ref{thm:main_2}, and Theorem~\ref{thm:main_3} for complex-valued differential forms.
In this paper, we focus on the case $n\equiv3\pmod4$.

The classical Yamabe problem asks whether, for a closed smooth Riemannian manifold $(M^n,g)$ with $n\ge 3$, there exists a conformal metric $\tilde g=u^{\frac{4}{n-2}}g$ whose scalar curvature is constant; equivalently one seeks a positive smooth solution of the Yamabe equation $-\frac{4(n-1)}{n-2}\Delta_g u+R_g u=\lambda u^{\frac{n+2}{n-2}}$, obtained as an Euler--Lagrange equation for minimizing the normalized total scalar curvature (the Yamabe functional) in the conformal class. Yamabe initiated this variational program \cite{Yamabe60}. Trudinger supplied the missing analytic estimates and corrected the argument \cite{Trudinger68}. A key step is a strict comparison with the round sphere: Aubin proved that if $(M^n,g)$ ($n\geq6$) is not locally conformally flat, then $Y(M,[g])<Y(\S^n)$, which implies the existence of a smooth minimizer that realizes a constant scalar curvature \cite{Aubin76}. The remaining difficulties are the locally conformally flat case and the lower dimensional case. Schoen \cite{Schoen84} then introduced the use of the positive mass theorem to establish the same strict inequality for any $(M,[g])$ that is not conformally equivalent to the round sphere, hence completing the resolution of the Yamabe problem. Beyond existence, the Yamabe constant $Y(M,[g])$ is a fundamental conformal invariant, and has been studied in many later works, see e.g. \cite{LeeParker87}.

There are many Yamabe type problems. Here we just mention one, which is very close to our Yamabe problem, the spinorial Yamabe problem. On a closed spin manifold one studies the conformally covariant Dirac operator $\D_g$ and the conformal quantity
\eq{
  \lambda_{\min}^+(M,[g])\coloneqq \inf_{\tilde g\in[g]}\,\lambda_1^+(\D_{\tilde g})\,\operatorname{Vol}(M,\tilde g)^{1/n},
}
and asks whether this infimum is attained. Very much in the spirit of our quotient formulation, $\lambda_{\min}^+(M,[g])$ can equivalently be written (for any fixed background $g\in[g]$) as an infimum of a Dirac--Sobolev quotient over smooth spinors
\eq{
  \lambda_{\min}^+(M,[g]) = \inf_{\int \< \D\varphi, \varphi\>>0}\frac{ \Big(\int_{M} \abs{\D\varphi}^{\frac{2n}{n+1}} \,\rdV_g \Big)^{\frac{n+1}{n}} }{ \int_{M} \< \D\varphi, \varphi\> \,\rdV_g },
}
whose Euler--Lagrange equation is a critical spinorial Yamabe equation, see e.g. \cites{Ammann03,AGHM08}. Historically, the problem is rooted in the Bochner--Lichnerowicz formula and early eigenvalue estimates for $\D_g$ \cite{Lichnerowicz63}, and was shaped by Hijazi's inequality relating $\lambda_{\min}^+(M,[g])$ to the classical Yamabe constant \cite{Hijazi86}. 
A particularly useful tool for proving the strict inequality $\lambda_{\min}^+(M,[g])<\lambda_{\min}^+(\S^n)$ is the mass endomorphism, defined as the constant term in the expansion of the Green's function of $\D_g$. If this endomorphism has a positive eigenvalue, then one obtains the strict inequality and hence existence of minimizers. This is analogous to the role of the ADM mass in Schoen's approach to the classical Yamabe problem \cite{AHM06}. More recently, Sire--Xu established further cases of the desired strict inequality (and hence existence of minimizers) in higher dimensions. Related blow-up and compactness analysis was developed by Isobe--Sire--Xu \cites{SireXu21,ISX24}. Nevertheless, it remains open in general whether every conformal spin manifold not conformally equivalent to the round sphere satisfies the strict inequality.

\smallskip

\noindent\textbf{Organization of the paper.} In Section~\ref{sec:prelim} we recall basic exterior-calculus identities and the conformal covariance of the curl operator on $\Omega^{\frac{n-1}{2}}$, and we record the conformal and gauge invariances of the functional defining $Y_{\rm curl}(M,[g])$. In Section~3 we prove Theorem~\ref{thm:main_1}: positivity and the sharp upper bound by the sphere, using the sharp curl--Sobolev inequality on $\S^n$ together with an Aubin-type cut-off argument based on Euclidean extremals. In Section~4 we establish Theorem~\ref{thm:main_2} by a concentration--compactness analysis, showing that the infimum is attained under the strict inequality $Y_{\rm curl}(M,[g])<Y_{\rm curl}(\S^n)$. In Section~5 we prove Theorem~\ref{thm:main_3} by constructing refined test forms whose asymptotic expansion detects the Weyl tensor, yielding the strict inequality (hence existence of minimizers) on manifolds that are not locally conformally flat. 
Appendix~\ref{appendix_regularity} completes the regularity argument.

\section{Preliminaries}\label{sec:prelim}

Throughout, we assume that $n\equiv 3\pmod 4$, so that the curl operator is self-adjoint.
\subsection{Exterior calculus}

Let $(M,g)$ be an oriented Riemannian manifold of dimension $n\equiv 3\pmod 4$. For each $0\le p\le n$, we denote by $\Omega^p(M)$ the space of $p$-forms and equip it with the inner product induced by $g$. The exterior derivative is denoted by $\rd:\Omega^p\to\Omega^{p+1}$, with the convention $\Omega^{n+1}=0$. We fix the orientation determined by the volume form $\rdV$. The Hodge star operator $*:\Omega^p\to\Omega^{n-p}$ is characterized by
\eq{\label{eq:def_Hodge_star}
    \alpha\w*\beta = \<\alpha, \beta\>\,\rdV.
}
The codifferential, i.e. the $L^2$-adjoint of $\rd$, is
\eq{
    \rd^*\coloneqq (-1)^{n(p+1)+1}*\rd*:\Omega^p\ra\Omega^{p-1}.
}

Since $n\equiv 3\pmod 4$, one has $\rd^*=-*\rd*$ on $\Omega^{\frac{n-1}{2}}$. We recall the standard identities
\eq{
    *^2 = (-1)^{p(n-p)}\,{\rm id},\qquad \rd^2 = (\rd^*)^2=0.
}

The Hodge Laplacian is defined by ${\Delta_H \coloneqq \rd\rd^* + \rd^*\rd}$
and the rough Laplacian ${\nabla^*\nabla \coloneqq -{\rm tr}\nabla^2}$.
These operators are related by the Weitzenb\"ock formula
\eq{
    \Delta_H = \nabla^*\nabla + \sum_{i,j} e^j \w e_i \ip R(e_i, e_j),
}
where $\{e_i\}$ is a local orthonormal frame, $\{e^i\}$ is the dual $1$-form coframe, $\ip$ is the interior multiplication, and $R$ is the curvature tensor. On the standard round sphere $\S^n$, acting on $p$-forms, this reduces to
\eq{\label{sphere_Weitzenbock}
    \Delta_H = \nabla^*\nabla + p(n-p).
}

We record several identities that will be used repeatedly.

\begin{proposition}\label{basic_form}
Let $(M^n,g)$ be a closed oriented Riemannian manifold of dimension $n\equiv 3\pmod 4$. For any $\alpha\in\Omega^p(M)$, $\beta\in\Omega^{n-p-1}(M)$ and $X\in\Gamma(TM)$, we have
\begin{enumerate}
    \item $X\ip*\!\alpha=(-1)^p*(X^\flat\w\alpha)$;
    \item $\<X\ip*\!\alpha,\beta\>=(-1)^p\<\alpha, X\ip*\beta\>$;
    \item $e^i\w e_j\ip\alpha + e_j\ip e^i\w\alpha = \delta_{ij}\alpha$;
    \item $\mathcal{L}_X\alpha = \rd (X\ip\alpha) + X\ip\rd\alpha$.
\end{enumerate}
\end{proposition}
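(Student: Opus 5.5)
The four identities are pointwise and tensorial; (4) is the only exception, being Cartan's formula. So I would prove each one at a fixed point $x\in M$. There I fix a positively oriented orthonormal frame $\{e_i\}$ with dual coframe $\{e^i\}$. By linearity in $X$ and $\alpha$, it suffices to take $X=e_k$ and $\alpha=e^I=e^{i_1}\w\cdots\w e^{i_p}$ with $i_1<\dots<i_p$. Two standard facts will be used throughout.
\begin{itemize}
\item The interior product $X\ip$ is the pointwise adjoint of $X^\flat\w$, that is, $\<X\ip\omega,\gamma\>=\<\omega,X^\flat\w\gamma\>$.
\item Since $n$ is odd, $p(n-p)$ is even for every $p$. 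Hence $*^2=\rid$ on all degrees, and $*$ is a self-inverse isometry, so $\<*\omega,\gamma\>=\<\omega,*\gamma\>$.
\end{itemize}

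For (1), I would test both sides against an arbitrary $\gamma\in\Omega^{n-p-1}$. Adjointness gives $\<X\ip*\alpha,\gamma\>=\<*\alpha,X^\flat\w\gamma\>$. Using $*$-symmetry and the defining relation \eqref{eq:def_Hodge_star}, this becomes
\[
\<\alpha,*(X^\flat\w\gamma)\>\,\rdV=(X^\flat\w\gamma)\w**\alpha\ \text{ up to ordering}.
\]
Rewriting via $\alpha\w(X^\flat\w\gamma)=(-1)^p (X^\flat\w\alpha)\w\gamma$ then shows it equals $(-1)^p\<*(X^\flat\w\alpha),\gamma\>\,\rdV$. Since it is easy to drop a sign in this chain, I would double-check it on the basis case. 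Take $X=e_k$ and $*e^I=\pm e^{I^c}$, where the sign is fixed by $e^I\w e^{I^c}=\pm\rdV$. If $k\in I$, both sides vanish. If $k\notin I$, one moves $e^k$ to the front of $e^{I^c}$ on the left and to the front of $e^I$ on the right, and compares the resulting permutation signs; this comparison must produce exactly $(-1)^p$. This sign bookkeeping is the only real obstacle, and I would handle it carefully via the basis check.

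For (2), let $\beta\in\Omega^{n-p-1}$. By (1), $\<X\ip*\alpha,\beta\>=(-1)^p\<*(X^\flat\w\alpha),\beta\>$. By $*$-symmetry this equals $(-1)^p\<X^\flat\w\alpha,*\beta\>$, and by adjointness it equals $(-1)^p\<\alpha,X\ip*\beta\>$. Only the oddness of $n$ enters here, through $*^2=\rid$.

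For (3), I would use that $e_j\ip$ is an antiderivation of degree $-1$ with $e_j\ip e^i=\delta_{ij}$. This gives $e_j\ip(e^i\w\alpha)=\delta_{ij}\alpha-e^i\w(e_j\ip\alpha)$, which is exactly the claim. For (4), I would quote Cartan's magic formula. If a proof is wanted, both sides are derivations of degree $0$ that commute with $\rd$ and are local. They agree on functions, since $\mathcal L_Xf=Xf=X\ip\rd f$, and hence also on exact $1$-forms $\rd f$. Since forms are locally generated by functions and the $\rd x^i$, the two sides agree everywhere.
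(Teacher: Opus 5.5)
Your proposal is correct. The paper states Proposition~\ref{basic_form} without proof, as standard exterior-calculus facts, and your verification is exactly the standard argument one would supply: adjointness of $X\ip$ and $X^\flat\w$, the symmetry $\<*\omega,\gamma\>=\<\omega,*\gamma\>$ coming from $*^2=\rid$ for odd $n$, the antiderivation rule for (3), and Cartan's formula for (4).

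Two small remarks. First, your basis check for (1) never uses the parity of $n$, so (1) in fact holds in every dimension, whereas (2) genuinely needs $n$ odd through the symmetry of $*$. Second, the intermediate display in (1) that you wrote ``up to ordering'' is harmless only because every reordering there carries sign $+1$ when $n$ is odd.
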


\subsection{The curl operator}\label{sec2.2}

Let $n\equiv 3\pmod 4$. We define the curl operator by
\eq{\label{eq:curl-def}
    \curl \coloneqq *\rd: \Omega^{\frac{n-1}{2}}\ra \Omega^{\frac{n-1}{2}}.
}
When $n=3$ and $\alpha$ is the $1$-form dual to a vector field $X$, definition \eqref{eq:curl-def} coincides with the usual vector-calculus curl.

A basic computation using $\<\alpha,\beta\>\rdV=\alpha\wedge*\beta$ and Stokes' theorem shows that on a closed manifold $M$ the curl operator is formally self-adjoint on $\Omega^{\frac{n-1}{2}}(M)$:
\eq{\label{eq:curl-selfadj}
    \int_M \< \curl\alpha,\beta\>\,\rdV
    = \int_M \< \alpha,\curl\beta\>\,\rdV.
}
In particular,
\eq{\label{eq:curl-energy}
    \int_M \< \curl\alpha,\alpha\>\,\rdV = \int_M \alpha\wedge \rd\alpha.
}

Let $\tilde g=\sigma^2g$ for some smooth function $\sigma>0$. The standard conformal covariance is
\eq{\label{conformal_change}
    *_{\tilde{g}} = \sigma^{n-2p} *_{g}, \quad \<\cdot,\cdot\>_{\tilde{g}} = \sigma^{-2p}\<\cdot,\cdot\>_g \quad\hbox{on}\ \Omega^p.
}
In particular, for any $\alpha\in\Omega^{\frac{n-1}{2}}$,
\eq{
    \curl_{\tilde{g}}\alpha = *_{\tilde{g}}\rd\alpha = \sigma^{-1}\,\curl_g\alpha.
}
Together with \eqref{conformal_change} this yields
\eq{
    \abs{\curl_{\tilde{g}}\alpha}_{\tilde{g}}^{\frac{2n}{n+1}} = \sigma^{-n}\abs{\curl_g\alpha}_g^{\frac{2n}{n+1}},\qquad
    \<\curl_{\tilde{g}}\alpha, \alpha\>_{\tilde{g}} = \sigma^{-n}\<\curl_g\alpha,\alpha\>_g.
}
Consequently, the following two quantities
\eq{\label{functionals}
    \int_M \abs{\curl\alpha}^{\frac{2n}{n+1}}\,\rdV, \qquad \int_M \<\curl\alpha,\alpha\>\,\rdV
}
are conformally invariant, and hence so is $J(\alpha)$.

\section{Proof of Theorem~\ref{thm:main_1}}

In this section, we use an Aubin-type cut-off argument to prove Theorem~\ref{thm:main_1}. Using this method, the analogous result was proved by Aubin \cite{Aubin76} for the classical Yamabe problem (see also \cite{LeeParker87} for a nice survey), by Ammann--Grosjean--Humbert--Morel \cite{AGHM08} for the spinorial Yamabe problem, and by Djadli--Hebey--Ledoux \cite{DHL2000} for the higher-order Yamabe problem.

\subsection{Positivity}
It is clear that $Y_{\rm curl}(M,[g])\ge 0$.
We first prove that
\eq{
    Y_{{\rm curl}}(M,[g]) > 0.
}

We need Gaffney's inequality for any differential form $\alpha\in W^{1,p}$ with $1<p<\infty$ (see e.g. \cite[Proposition 4.10]{Scott95})
\eq{\label{eq:Gaffney}
    \norm{\alpha}_{W^{1,p}} \lesssim \norm{\rd\alpha}_{L^p} + \norm{\rd^*\alpha}_{L^p} + \norm{\alpha}_{L^p}.
}
In the paper we use the notation $f\lesssim{g}$ for $f\le Cg$ with a  constant $C$ independent of $f$ and $g$.
Recall the Hodge decomposition
\eq{
    \Omega^{\frac{n-1}{2}}(M) = \rd\Omega^{\frac{n-3}{2}}(M) \oplus \rd^*\Omega^{\frac{n+1}{2}}(M) \oplus \mathcal{H}^{\frac{n-1}{2}}(M),
}
where the components are the spaces of exact, co-exact, and harmonic $\frac{n-1}{2}$-forms, respectively. In view of the gauge invariance \eqref{eq:gauge}, without loss of generality, we may assume
\eq{
    \rd^*\alpha=0, \qquad \alpha\perp\mathcal{H}^{\frac{n-1}{2}}(M).
}
Gaffney's inequality \eqref{eq:Gaffney} then gives
\eq{\label{eq:Gaffney_1}
    \norm{\alpha}_{W^{1,\frac{2n}{n+1}}} \lesssim \norm{\rd\alpha}_{L^\frac{2n}{n+1}} + \norm{\alpha}_{L^\frac{2n}{n+1}}.
}
We now claim that
\eq{\label{eq:Gaffney_2}
    \norm{\alpha}_{W^{1,\frac{2n}{n+1}}} \lesssim \norm{\rd\alpha}_{L^\frac{2n}{n+1}}.
}
If this is not true, then there exists $\{\alpha_k\}_{k=1}^\infty$ with $\rd^*\alpha_k=0$ and $\alpha_k\perp\mathcal{H}^{\frac{n-1}{2}}(M)$, such that
\eq{\label{eq:Gaffney_3}
    \norm{\alpha_k}_{L^\frac{2n}{n+1}}=1, \qquad \norm{\rd\alpha_k}_{L^\frac{2n}{n+1}}\to0.
}
Together with \eqref{eq:Gaffney_1}, we see that after passing to a subsequence, $\alpha_k\to\alpha$ strongly in $L^{\frac{2n}{n+1}}$. Then $\rd^*\alpha=0$, $\alpha\perp\mathcal{H}^{\frac{n-1}{2}}(M)$, and \eqref{eq:Gaffney_3} implies that $\rd\alpha=0$. Hence $\alpha=0$, which contradicts \eqref{eq:Gaffney_3}. The claim \eqref{eq:Gaffney_2} then follows.

Using the critical Sobolev embedding and H\"older's inequality, we have
\eq{
    \Abs{ \int_M \alpha\w\rd\alpha } \leq \norm{\alpha}_{L^\frac{2n}{n-1}}\norm{\rd\alpha}_{L^\frac{2n}{n+1}} \lesssim \norm{\rd\alpha}_{L^\frac{2n}{n+1}}^2.
}
Hence the positivity follows.

\subsection{The Euclidean minimizers}

Let $\delta$ be the standard Euclidean metric on $\R^n$. By conformal invariance, we see that
\eq{
    Y_{{\rm curl}}(\R^n,[\delta]) = Y_{{\rm curl}}(\S^n) = \frac{n+1}{2}\omega_n^{\frac{1}{n}}.
}
Moreover, \cite[Theorem 1.1]{WZ26curl} implies that the minimizers are precisely the image of positive Killing $\frac{n-1}{2}$-forms from $\S^n$ to $\R^n$ via the inverse stereographic projection, modulo $\ker(\rd)$ and ${\rm Conf}^+(\S^n)$.

Let $v_0\in\Omega^{\frac{n-1}{2}}(\R^n)$ be such a Euclidean minimizer, namely
\eq{\label{eq:0.1}
    v_0 = \Psi^*\alpha_0,
}
where $\alpha_0$ is a positive Killing $\frac{n-1}{2}$-form on $\S^n$ with unit length, and $\Psi$ is the inverse stereographic projection given by
\eq{\label{eq:0.1.1}
    \Psi:\R^n\to\S^n\backslash\{N\}, \qquad \Psi(y) = \Big( \frac{2y}{1+\abs{y}^2}, \frac{\abs{y}^2-1}{1+\abs{y}^2} \Big).
}

\begin{lemma}
We have
\eq{
    \abs{v_0(y)}_\delta = O(\abs{y}^{-(n-1)}), \qquad \abs{\rd v_0(y)}_\delta = O(\abs{y}^{-(n+1)}).
}
\end{lemma}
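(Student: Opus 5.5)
We will obtain both decay estimates from the fact that $v_0=\Psi^*\alpha_0$ and $\rd v_0=\Psi^*(\rd\alpha_0)$, where $\alpha_0$ and $\rd\alpha_0$ are smooth forms on the whole sphere $\S^n$, including at the north pole $N$. The decay at infinity then comes only from how the pullback $\Psi^*$ scales covectors near $N$. The main point is to pull back near $N$ through a chart that is smooth at $N$, and not through $\Psi$ itself.

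The plan has three steps.

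\textbf{Step 1 (the inversion).} Let $\iota(y)=y/\abs{y}^2$ be the inversion of $\R^n\setminus\{0\}$, and set $\Phi=\Psi\circ\iota$. Then $\Phi(z)=\big(\frac{2z}{1+\abs z^2},\frac{1-\abs z^2}{1+\abs z^2}\big)$. This is the inverse stereographic projection from the south pole, so $\Phi$ is a smooth diffeomorphism from a neighborhood of $z=0$ onto a neighborhood of $N$, with $\Phi(0)=N$.

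\textbf{Step 2 (bounded coefficients near $z=0$).} Write
\[
\Phi^*\alpha_0=\sum_{|I|=p}a_I(z)\,\rd z^I,\qquad \Phi^*\rd\alpha_0=\sum_{|K|=p+1}b_K(z)\,\rd z^K,\qquad p=\tfrac{n-1}{2}.
\]
Since $\alpha_0$ and $\rd\alpha_0$ are smooth on $\S^n$, the coefficients $a_I$ and $b_K$ are smooth and hence bounded for $\abs z\le1$.

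\textbf{Step 3 (pull back by $\iota$ and count degrees).} Since $\iota$ is an involution, $v_0=\iota^*(\Phi^*\alpha_0)$ on $\abs y\ge1$. We have $\rd z^i=\iota^*\rd z^i=\abs y^{-2}\big(\rd y^i-2y^iy_j\rd y^j/\abs y^2\big)$, so $\abs{\iota^*\rd z^i}_\delta\lesssim\abs y^{-2}$. A wedge of $k$ such $1$-forms therefore has Euclidean norm $O(\abs y^{-2k})$. This gives
\[
\abs{v_0(y)}_\delta\lesssim \sup\abs{a_I}\,\abs y^{-2p}=O(\abs y^{-(n-1)}),
\]
and, using $\rd v_0=\iota^*\Phi^*\rd\alpha_0$ with degree $p+1$,
\[
\abs{\rd v_0(y)}_\delta=O(\abs y^{-2(p+1)})=O(\abs y^{-(n+1)}).
\]
On $\abs y\le1$ both forms are smooth, so the bounds hold trivially there.

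There is a cleaner alternative via conformality. Since $\Psi^*g_{\S^n}=\sigma^2\delta$ with $\sigma=\frac{2}{1+\abs y^2}$, the norm relation \eqref{conformal_change} gives $\abs{\Psi^*\beta}_\delta=\sigma^{k}\abs\beta_{g_{\S^n}}\circ\Psi$ for any $k$-form $\beta$. Taking $\beta=\alpha_0$ with $k=p$, and $\beta=\rd\alpha_0$ with $k=p+1$, yields the same powers of $\abs y$ directly. For the derivative this needs only $\abs{\rd\alpha_0}_{g_{\S^n}}$ bounded, which holds because $\alpha_0$ is smooth, or because $\rd\alpha_0$ is a constant multiple of $*\alpha_0$ for a Killing form. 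I would present this conformal version as the proof and keep the inversion computation as a check.
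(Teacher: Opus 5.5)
Your proposal is correct, and the conformal version you plan to present is essentially the paper's own argument. The paper uses $\Psi^*g_{\rm st}=\big(\frac{2}{1+|y|^2}\big)^2\delta$ and the conformal rule for norms of forms to get the exact identities $|v_0|_\delta=\big(\frac{2}{1+|y|^2}\big)^{\frac{n-1}{2}}$ and $|\rd v_0|_\delta=\frac{n+1}{2}\big(\frac{2}{1+|y|^2}\big)^{\frac{n+1}{2}}$, using $|\alpha_0|\equiv1$ and $\curl\alpha_0=\frac{n+1}{2}\alpha_0$; the exact formulas are reused in Section 5, and your inversion computation via $\Phi=\Psi\circ\iota$ is a valid but unnecessary independent check.
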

\begin{proof}
It is clear that
\eq{\label{eq:0.2}
    \Psi^*g_{{\rm st}} = \Big( \frac{2}{1+\abs{y}^2} \Big)^2 \delta.
}
Since
\eq{
    \abs{v_0}_{\Psi^*g_{{\rm st}}} = \abs{\alpha_0}_{g_{{\rm st}}} = 1,
}
using \eqref{conformal_change} we have
\eq{\label{eq:0.2.1}
    \abs{v_0(y)}_\delta = \Big( \frac{2}{1+\abs{y}^2} \Big)^{\frac{n-1}{2}}.
}
Hence the first claim follows. Next, using the positive Killing form equation
\eq{\label{eq:0.3}
    \curl\alpha_0 = \frac{n+1}{2}\alpha_0,
}
we see that
\eq{
    \abs{\rd\alpha_0}_{g_{{\rm st}}} = \abs{\curl\alpha_0}_{g_{{\rm st}}} = \frac{n+1}{2}.
}
Again \eqref{conformal_change} implies that
\eq{
    \abs{\rd v_0(y)}_\delta = \frac{n+1}{2}\Big( \frac{2}{1+\abs{y}^2} \Big)^{\frac{n+1}{2}}.
}
Hence the second claim follows.
\end{proof}

Given a small $\epsilon>0$, set
\eq{\label{eq:0.9}
    v_\epsilon(y) \coloneqq \epsilon^{-\frac{n-1}{2}}v_0(y/\epsilon).
}
Then
\eq{\label{eq:1}
    \abs{v_\epsilon(y)}_\delta \leq C\epsilon^{\frac{n-1}{2}}\abs{y}^{-(n-1)}, \qquad \abs{\rd v_\epsilon(y)}_\delta \leq C\epsilon^{\frac{n+1}{2}}\abs{y}^{-(n+1)}.
}
One can easily check that
\eq{\label{eq:1.1}
    \int_{\R^n} \abs{\rd v_\epsilon(y)}^{\frac{2n}{n+1}}_\delta \rd y = \int_{\R^n} \abs{\rd v_0(y)}^{\frac{2n}{n+1}}_\delta \rd y,
}
and
\eq{\label{eq:1.2}
    \int_{\R^n} v_\epsilon\w\rd v_\epsilon = \int_{\R^n} v_0\w\rd v_0.
}

\subsection{The cut-off function}\label{sec3.3}

Fix $x_0\in M$. Choosing normal coordinates around $x_0$, we may identify the geodesic ball $B_r^M(x_0)$ with the Euclidean ball $B_r\coloneqq B_r(0)\subset\R^n$, for a fixed small $r>0$. In this small geodesic ball, we have
\eq{\label{eq:1.3}
    g = \delta + O(\abs{y}^2),
}
and hence
\eq{\label{eq:1.4}
    \rdV_g = \big( 1 + O(\abs{y}^2) \big) \rd y.
}

Choose a cut-off function $\eta\in C^\infty(\R^n)$ such that
\eq{\label{eq:1.5}
    \eta\equiv1 \quad \hbox{in}\ \ B_{r/2}, \qquad \eta\equiv0 \quad \hbox{in}\ \ \R^n\backslash B_r, \qquad \abs{\rd\eta}\leq \frac{4}{r}.
}
Set
\eq{\label{eq:1.6}
    \beta_\epsilon \coloneqq \eta v_\epsilon.
}
By a slight abuse of notation, we regard $\beta_\epsilon$ as an $\frac{n-1}{2}$-form supported on $B_r^M(x_0)\subset M$ or on $B_r\subset \R^n$, according to the context.
On the one hand,
\eq{
    \beta_\epsilon\w\rd\beta_\epsilon = \eta v_\epsilon\w\rd(\eta v_\epsilon) = \eta v_\epsilon\w(\eta\,\rd v_\epsilon + \rd\eta\w v_\epsilon) = \eta^2v_\epsilon\w\rd v_\epsilon,
}
where in the last step we used $v_\epsilon\w v_\epsilon=0$ since $\frac{n-1}{2}$ is odd. Hence
\eq{\label{eq:2}
    \int_M \beta_\epsilon\w\rd\beta_\epsilon = \int_{B_r} \eta^2 v_\epsilon\w\rd v_\epsilon = \int_{\R^n} v_0\w\rd v_0 - \int_{\R^n}(1-\eta^2)v_\epsilon\w\rd v_\epsilon.
}
Note that $1-\eta^2$ is supported on $\R^n\backslash B_{r/2}$, on which we have
\eq{
    \abs{v_\epsilon\w\rd v_\epsilon}_\delta (y) \lesssim \epsilon^n\abs{y}^{-2n} = O(\epsilon^n),
}
where we used \eqref{eq:1}. Therefore, by \eqref{eq:2} we have
\eq{\label{eq:2.1}
    \int_M \beta_\epsilon\w\rd\beta_\epsilon = \int_{\R^n} v_0\w\rd v_0 + O(\epsilon^n) > 0.
}
On the other hand,
\eq{\label{eq:3}
    \abs{\rd\beta_\epsilon}_\delta^{\frac{2n}{n+1}} = \abs{\eta\,\rd v_\epsilon+\rd\eta\w v_\epsilon}_\delta^{\frac{2n}{n+1}} \leq \abs{\eta\,\rd v_\epsilon}_\delta^{\frac{2n}{n+1}} + C\Big( \abs{\eta\,\rd v_\epsilon}_\delta^{\frac{n-1}{n+1}}\abs{\rd\eta\w v_\epsilon}_\delta + \abs{\rd\eta\w v_\epsilon}_\delta^{\frac{2n}{n+1}} \Big).
}
Note that $\rd\eta$ is supported on $B_r\backslash B_{r/2}$, on which we have
\eq{\label{eq:4}
    \abs{\eta\,\rd v_\epsilon}_\delta \lesssim \epsilon^{\frac{n+1}{2}}\abs{y}^{-(n+1)} = O(\epsilon^{\frac{n+1}{2}}),
}
and
\eq{\label{eq:5}
    \abs{\rd\eta\w v_\epsilon}_\delta \lesssim \frac{4}{r}\,\epsilon^{\frac{n-1}{2}}\abs{y}^{-(n-1)} = O(\epsilon^{\frac{n-1}{2}}).
}
Combining \eqref{eq:3}, \eqref{eq:4}, and \eqref{eq:5} gives
\eq{\label{eq:6}
    \int_{\R^n} \abs{\rd\beta_\epsilon}_\delta^{\frac{2n}{n+1}} \rd y &\leq \int_{\R^n} \abs{\eta\,\rd v_\epsilon}_\delta^{\frac{2n}{n+1}} \rd y + C\int_{\R^n} \abs{\eta\,\rd v_\epsilon}_\delta^{\frac{n-1}{n+1}}\abs{\rd\eta\w v_\epsilon} \,\rd y + C\int_{\R^n} \abs{\rd\eta\w v_\epsilon}_\delta^{\frac{2n}{n+1}} \rd y \\
    &\leq \int_{\R^n} \abs{\rd v_\epsilon}_\delta^{\frac{2n}{n+1}} \rd y + O(\epsilon^{n-1}) + O(\epsilon^{\frac{n(n-1)}{n+1}}) \\
    &= \int_{\R^n} \abs{\rd v_0}_\delta^{\frac{2n}{n+1}} \rd y + O(\epsilon^{\frac{n(n-1)}{n+1}}),
}
where we used \eqref{eq:1.1} in the last step. In order to relate the integral on $M$ and that on $\R^n$, we split the integral into two parts, inside and outside $B_{\sqrt{\epsilon}}$. Inside the small ball, using \eqref{eq:1.3} and \eqref{eq:1.4} we have
\eq{\label{eq:6.1}
    \abs{\rd\beta_\epsilon(y)}_g^{\frac{2n}{n+1}}\rdV_g = (1+O(\abs{y}^2))\abs{\rd\beta_\epsilon(y)}_\delta^{\frac{2n}{n+1}}\rd y,
}
hence
\eq{\label{eq:7}
    \int_{B_{\sqrt{\epsilon}}} \abs{\rd \beta_\epsilon(y)}_\delta^{\frac{2n}{n+1}} \rd y = \int_{B_{\sqrt{\epsilon}}^M(x_0)} \abs{\rd \beta_\epsilon(y)}_g^{\frac{2n}{n+1}} \rdV_g + O(\epsilon).
}
Outside the small ball, using \eqref{eq:1} we have
\eq{\label{eq:8}
    \int_{\R^n\backslash B_{\sqrt{\epsilon}}} \abs{\rd v_\epsilon(y)}_\delta^{\frac{2n}{n+1}} \rd y \leq C\epsilon^n\int_{\sqrt{\epsilon}}^\infty s^{-2n}\cdot s^{n-1} \rd s = O(\epsilon^{\frac{n}{2}}),
}
and
\eq{\label{eq:9}
    \int_{\R^n\backslash B_{\sqrt{\epsilon}}} \abs{\rd\eta\w v_\epsilon(y)}_\delta^{\frac{2n}{n+1}} \rd y \leq C\epsilon^{\frac{n(n-1)}{n+1}} \rVol(B_r\backslash B_{r/2}) = O(\epsilon^{\frac{n(n-1)}{n+1}}).
}
We may assume $\sqrt{\epsilon}<r$. Using \eqref{eq:6.1} we have
\eq{\label{eq:10}
    \int_M \abs{\rd\beta_\epsilon(y)}_g^{\frac{2n}{n+1}}\rdV_g &\leq \int_{B_r} \abs{\rd\beta_\epsilon(y)}_\delta^{\frac{2n}{n+1}} \rd y + C_1\int_{B_r} \abs{y}^2\abs{\rd\beta_\epsilon(y)}_\delta^{\frac{2n}{n+1}} \rd y \\
    &\leq \int_{\R^n} \abs{\rd\beta_\epsilon(y)}_\delta^{\frac{2n}{n+1}} \rd y + C_1\epsilon\int_{B_{\sqrt{\epsilon}}} \abs{\rd\beta_\epsilon(y)}_\delta^{\frac{2n}{n+1}} \rd y + C_1r^2\int_{B_r\backslash B_{\sqrt{\epsilon}}} \abs{\rd\beta_\epsilon(y)}_\delta^{\frac{2n}{n+1}} \rd y.
}
Combining \eqref{eq:6}, \eqref{eq:7}, \eqref{eq:8}, \eqref{eq:9}, and \eqref{eq:10} gives
\eq{\label{eq:11}
    \int_M \abs{\rd\beta_\epsilon(y)}_g^{\frac{2n}{n+1}}\rdV_g \leq \int_{\R^n} \abs{\rd v_0}_\delta^{\frac{2n}{n+1}} \rd y + o_\epsilon(1).
}

We now prove Theorem~\ref{thm:main_1}.

\begin{proof}[Proof of Theorem~\ref{thm:main_1}]
By \eqref{eq:2.1} and \eqref{eq:11} we have
\eq{
    Y_{{\rm curl}}(M,[g]) \leq J_g(\beta_\epsilon) \leq \frac{ \Big(\int_{\R^n} \abs{\rd v_0}_\delta^{\frac{2n}{n+1}} \rd y + o_\epsilon(1) \Big)^{\frac{n+1}{n}} }{ \int_{\R^n} v_0\w\rd v_0 + O(\epsilon^n) } \to Y_{{\rm curl}}(\S^n) = \frac{n+1}{2}\omega_n^{\frac{1}{n}} \quad\hbox{as}\ \ \epsilon\to0.
}
Hence we complete the proof.
\end{proof}

\section{Proof of Theorem~\ref{thm:main_2}}\label{sec4}

The idea is to use the concentration--compactness argument, but with a particular modification in order to deal with the half-ellipticity of the curl operator.

Let $\{\alpha_k\}_{k=1}^\infty$ be a minimizing sequence such that
\eq{\label{eq:normalization}
    \int_M \alpha_k\w\rd\alpha_k = 1, \qquad \Big( \int_M \abs{\rd\alpha_k}^{\frac{2n}{n+1}} \rdV_g \Big)^{\frac{n+1}{n}} \to Y_{{\rm curl}}(M,[g]).
}
Again, we may assume for each $k\geq1$
\eq{
    \rd^*\alpha_k=0, \qquad \alpha_k\perp\mathcal{H}^{\frac{n-1}{2}}(M),
}
hence \eqref{eq:Gaffney_2} implies that $\{\alpha_k\}$ is bounded in $W^{1,\frac{2n}{n+1}}$. After passing to a subsequence, we have $\alpha_k \rightharpoonup \alpha$ in $W^{1,\frac{2n}{n+1}}$. Set $\gamma_k\coloneqq\alpha_k-\alpha$, then $\gamma_k \rightharpoonup 0$ in $W^{1,\frac{2n}{n+1}}$.

\begin{lemma}
After passing to a subsequence, we have
\eq{
    \abs{\rd\alpha_k}^{\frac{2n}{n+1}}\rdV_g \overset{\ast}{\rightharpoonup} \theta, \qquad \abs{\rd\gamma_k}^{\frac{2n}{n+1}}\rdV_g \overset{*}{\rightharpoonup} \mu, \qquad \gamma_k\w\rd\gamma_k \overset{\ast}{\rightharpoonup} \nu. 
}
\end{lemma}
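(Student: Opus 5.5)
The plan is to apply weak-$\ast$ compactness of bounded sets of finite (signed) Radon measures on the compact manifold $M$, i.e.\ Banach--Alaoglu in $C(M)^\ast$ (Riesz representation), to each of the three sequences. A bounded sequence of signed measures of uniformly bounded total variation has a weak-$\ast$ convergent subsequence. We take a subsequence for the first sequence, a further one for the second, and a further one for the third (a diagonal choice is not even needed, since there are only three). So everything reduces to uniform bounds on total variation.

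For the first two measures, the total variations are $\int_M\abs{\rd\alpha_k}^{\frac{2n}{n+1}}\rdV_g$ and $\int_M\abs{\rd\gamma_k}^{\frac{2n}{n+1}}\rdV_g$. The former converges to $Y_{\rm curl}(M,[g])^{\frac{n}{n+1}}$ by \eqref{eq:normalization}, hence is bounded. For the latter, $\rd\gamma_k=\rd\alpha_k-\rd\alpha$, so $\norm{\rd\gamma_k}_{L^{\frac{2n}{n+1}}}\le\norm{\rd\alpha_k}_{L^{\frac{2n}{n+1}}}+\norm{\rd\alpha}_{L^{\frac{2n}{n+1}}}$. Here $\alpha\in W^{1,\frac{2n}{n+1}}$ as a weak limit, and weak lower semicontinuity gives $\norm{\alpha}_{W^{1,\frac{2n}{n+1}}}\le\liminf_k\norm{\alpha_k}_{W^{1,\frac{2n}{n+1}}}$, which is finite by \eqref{eq:Gaffney_2}.

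For the signed measure $\gamma_k\w\rd\gamma_k$, regarded as an $n$-form, i.e.\ the density $\<\curl\gamma_k,\gamma_k\>\rdV_g$, the total variation is at most $\int_M\abs{\gamma_k}\abs{\rd\gamma_k}\rdV_g$. By H\"older this is at most $\norm{\gamma_k}_{L^{\frac{2n}{n-1}}}\norm{\rd\gamma_k}_{L^{\frac{2n}{n+1}}}$. The critical Sobolev embedding $W^{1,\frac{2n}{n+1}}\hookrightarrow L^{\frac{2n}{n-1}}$, together with the uniform $W^{1,\frac{2n}{n+1}}$ bound on $\gamma_k=\alpha_k-\alpha$, then bounds this by a constant independent of $k$, exactly as in the positivity argument.

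The only mild point of care is the third measure. It is signed rather than positive, so we must invoke compactness for signed measures, equivalently for the positive and negative parts separately. Its integrand is also only in $L^1$, not better, because the Sobolev embedding used is critical. Neither issue is a real obstacle: weak-$\ast$ convergence in $C(M)^\ast$ requires only $L^1$ bounds. The limits $\theta,\mu\ge0$ are finite nonnegative measures, and $\nu$ is a finite signed measure.
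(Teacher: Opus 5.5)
Your proposal is correct and follows essentially the same route as the paper. You reduce to uniform total-variation bounds via weak-$\ast$ compactness of bounded measures on compact $M$, then bound the first sequence by the normalization, the second by Minkowski plus weak lower semicontinuity, and the third by H\"older with the critical Sobolev embedding and the Gaffney-based uniform $W^{1,\frac{2n}{n+1}}$ bound on $\gamma_k$; your explicit remarks on sequential Banach--Alaoglu for signed measures simply make precise what the paper leaves implicit in ``it suffices to show that the three sequences of measures are all bounded.''
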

\begin{proof}
It suffices to show that the three sequences of measures are all bounded. The first claim follows from \eqref{eq:normalization}. Using the weak lower semi-continuity we have
\eq{
    \norm{\rd\alpha}_{L^\frac{2n}{n+1}} \leq \liminf_{k\to\infty}\, \norm{\rd\alpha_k}_{L^\frac{2n}{n+1}} < \infty.
}
By Minkowski's inequality,
\eq{\label{eq:12}
    \sup_k\,\norm{\rd\gamma_k}_{L^\frac{2n}{n+1}} \leq \sup_k\,\norm{\rd\alpha_k}_{L^\frac{2n}{n+1}} + \norm{\rd\alpha}_{L^\frac{2n}{n+1}} < \infty.
}
Hence the second claim then follows.

Finally, by \eqref{eq:Gaffney_2} we see that $\{\alpha_k\}$ is bounded in $W^{1,\frac{2n}{n+1}}$, hence
\eq{
    \norm{\alpha}_{W^{1,\frac{2n}{n+1}}} \leq \liminf_{k\to\infty}\, \norm{\alpha_k}_{W^{1,\frac{2n}{n+1}}} < \infty.
}
By Minkowski's inequality,
\eq{\label{eq:13}
    \sup_k\,\norm{\gamma_k}_{L^{\frac{2n}{n-1}}} \lesssim \sup_k\,\norm{\gamma_k}_{W^{1,\frac{2n}{n+1}}} \leq \sup_k\,\norm{\alpha_k}_{W^{1,\frac{2n}{n+1}}} + \norm{\alpha}_{W^{1,\frac{2n}{n+1}}} < \infty.
}
The third claim follows from \eqref{eq:12}, \eqref{eq:13}, and H\"older's inequality.
\end{proof}

It is clear that $\mu$ is a non-negative finite measure, while $\nu$ could be a signed finite measure. The Hahn--Jordan Decomposition Theorem implies that
\eq{
    \nu = \nu^+ - \nu^- \qquad\hbox{with}\quad \nu^+\geq0, \quad \nu^-\geq0, \quad \nu^+\perp\nu^-.
}
In other words, there exists a decomposition (unique up to a $\abs{\nu}$-negligible set)
\eq{
    M = P \cup N \qquad\hbox{with}\quad P\cap N=\emptyset,
}
such that for any measurable subset $E$,
\eq{
    E\subset P \implies \nu(E)\geq0, \qquad E\subset N \implies \nu(E)\leq0.
}

\begin{lemma}\label{lem4.2}
$\nu^+$ is supported on at most countably many atoms $\{x_j\}_{j=1}^\infty$. Moreover, if we denote
\eq{
    \mu_j \coloneqq \mu(\{x_j\}), \qquad \nu_j \coloneqq \nu^+(\{x_j\}),
}
then
\eq{
    \frac{n+1}{2}\omega_n^{\frac{1}{n}}\nu_j \leq \mu_j^{\frac{n+1}{n}}.
}
\end{lemma}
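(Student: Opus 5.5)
The plan is the concentration--compactness argument of Lions, run through a localized reverse Hölder inequality
\begin{equation}\label{eq:plan_RH}
\frac{n+1}{2}\omega_n^{\frac{1}{n}}\int_M \phi^2\,\rd\nu \le \Big(\int_M \abs{\phi}^{\frac{2n}{n+1}}\,\rd\mu\Big)^{\frac{n+1}{n}} \qquad\hbox{for all } \phi\in C^\infty(M).
\end{equation}
Once \eqref{eq:plan_RH} is available, the rest is standard measure theory. Restricting to $\phi$ approximating indicators of Borel sets inside the positive set $P$ gives
\[
\frac{n+1}{2}\omega_n^{\frac1n}\,\nu^+(E)\le \mu(E)^{\frac{n+1}{n}}
\]
for every Borel $E$. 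Since the exponent $\frac{n+1}{n}>1$ and $\mu$ is finite, $\nu^+$ is absolutely continuous with respect to $\mu$. Its density $\frac{\rd\nu^+}{\rd\mu}$ is bounded by $C\mu(E)^{1/n}\to0$ at every non-atom of $\mu$, so $\nu^+$ vanishes off the (countable) atoms of $\mu$. Evaluating at $E=\{x_j\}$ then gives the stated inequality.

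\textbf{Deriving the local inequality.} Fix $\phi$ and test the inequality on $\phi\gamma_k$. For small scales I would not use the Euclidean constant directly. Instead I would use the sharp inequality on $M$ at small scale: cover $M$ by small normal-coordinate balls with a partition of unity. In each ball, $g$ is $(1+o(1))$-close to $\delta$, and on $\R^n$ the sharp curl--Sobolev inequality holds with constant $Y_{\rm curl}(\S^n)$, by conformal invariance. That inequality reads
\[
Y_{\rm curl}(\S^n)\Big|\int \beta\w\rd\beta\Big| \le \norm{\rd\beta}_{\frac{2n}{n+1}}^2 .
\]
It holds only for $\int\beta\w\rd\beta>0$; when the integral is $\le0$ the bound for the positive part is trivial. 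It is also gauge invariant, so it applies to compactly supported $\beta$. Take $\beta=\phi\gamma_k$ with $\phi$ supported in a small ball $B_\rho$. As in \eqref{eq:2}, $\beta\w\rd\beta=\phi^2\gamma_k\w\rd\gamma_k$, because the cross term $\gamma_k\w\rd\phi\w\gamma_k$ vanishes since $\frac{n-1}{2}$ is odd. This exact cancellation is the key structural point. Also $\rd\beta=\phi\,\rd\gamma_k+\rd\phi\w\gamma_k$.

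\textbf{Passing to the limit.} Since $\gamma_k\rightharpoonup0$ in $W^{1,\frac{2n}{n+1}}$, Rellich gives $\gamma_k\to0$ strongly in $L^{\frac{2n}{n+1}}$. Hence $\norm{\rd\phi\w\gamma_k}_{\frac{2n}{n+1}}\to0$ and $\norm{\rd(\phi\gamma_k)}_{\frac{2n}{n+1}}^{\frac{2n}{n+1}}\to\int\abs\phi^{\frac{2n}{n+1}}\rd\mu$. Comparing Euclidean and $g$-quantities on $B_\rho$ costs a factor $1+O(\rho^2)$. The limit gives
\[
Y_{\rm curl}(\S^n)(1-C\rho^2)\int\phi^2\rd\nu\le\Big(\int\abs\phi^{\frac{2n}{n+1}}\rd\mu\Big)^{\frac{n+1}{n}}
\]
for $\phi$ supported in $B_\rho$. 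That is enough for atoms, since we may shrink $\rho$ around each point. It also gives \eqref{eq:plan_RH} up to a constant for general $\phi$, which suffices for the atomicity argument, where only the power $\frac{n+1}{n}>1$ matters.

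\textbf{Main obstacle.} The hardest step is controlling the sign: $\nu$ is signed, so the inequality only bounds $\int\phi^2\rd\nu$, not $\nu^+$ directly. I would handle this by choosing $\phi$ approximating $\mathbf 1_{E}$ for compact $E\subset P$ with $\abs\nu(U\setminus E)$ small, where $U$ is an open neighbourhood of $E$. This makes $\int\phi^2\rd\nu\ge\nu^+(E)-\abs\nu(U\setminus E)$, and outer regularity of $\mu$ controls the right-hand side.
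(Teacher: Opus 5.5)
Your proposal is correct and follows essentially the same route as the paper: you localize with a cut-off in a small normal-coordinate ball, apply the sharp Euclidean curl--Sobolev inequality to $\eta\gamma_k$ using the cancellation $\eta\gamma_k\w\rd(\eta\gamma_k)=\eta^2\gamma_k\w\rd\gamma_k$, pass to the limit via Rellich and the $(1+o_\rho(1))$ metric comparison, and handle the sign of $\nu$ by approximating indicators of compact subsets of $P$ (the paper bounds the error by $\nu^-(U)$, you by $\abs{\nu}(U\setminus E)$). The only minor variation is the final step: you conclude $\nu^+(\mathcal{B})=0$ through absolute continuity and the density bound $\rd\nu^+/\rd\mu\lesssim\mu(B_r(x))^{1/n}$, which needs a differentiation theorem, whereas the paper avoids this by partitioning the non-atomic set into countably many Borel pieces of $\mu$-measure at most $\tau$ and summing.
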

\begin{proof}
Given any small geodesic ball $B_\rho$ and any cut-off function $\eta\in C^\infty_c(B_\rho)$, the sharp curl--Sobolev inequality gives that
\eq{\label{eq:14}
    \frac{n+1}{2}\omega_n^{\frac{1}{n}} \int_{\R^n} \eta\gamma_k\w\rd(\eta\gamma_k) \leq \Big( \int_{\R^n} \abs{\rd(\eta\gamma_k)}_\delta^{\frac{2n}{n+1}} \rd y \Big)^{\frac{n+1}{n}}.
}
Since $\frac{n-1}{2}$ is odd, we have
\eq{\label{eq:15}
    \eta\gamma_k\w\rd(\eta\gamma_k) = \eta^2\gamma_k\w\rd\gamma_k.
}
Minkowski's inequality gives that
\eq{\label{eq:16}
    \norm{\rd(\eta\gamma_k)}_{L^{\frac{2n}{n+1}}} = \norm{\eta\rd\gamma_k+\rd\eta\w\gamma_k}_{L^{\frac{2n}{n+1}}} \leq \norm{\eta\rd\gamma_k}_{L^{\frac{2n}{n+1}}} + \norm{\rd\eta\w\gamma_k}_{L^{\frac{2n}{n+1}}}.
}
Recall \eqref{eq:1.3} that
\eq{\label{eq:17}
    g = \delta + o_\rho(1) \qquad\hbox{in}\ \ B_\rho.
}
Since $\gamma_k \rightharpoonup 0$ in $W^{1,\frac{2n}{n+1}}$, we have $\gamma_k\to0$ strongly in $L^{\frac{2n}{n+1}}$. Now insert \eqref{eq:15} and \eqref{eq:16} into \eqref{eq:14}, and let $k\to\infty$. Using \eqref{eq:17} we have
\eq{\label{eq:18}
    \frac{n+1}{2}\omega_n^{\frac{1}{n}} \int_M \eta^2\rd\nu \leq \big( 1 + o_\rho(1) \big) \Big( \int_M \abs{\eta}^{\frac{2n}{n+1}} \rd\mu \Big)^{\frac{n+1}{n}}.
}

Given any compact $K\subset\subset P\cap B_{\rho/2}$, we choose an open set $U$ such that $K\subset U\subset\subset B_\rho$, and choose the cut-off function $\eta$ such that
\eq{
    0\leq\eta\leq1, \qquad \eta|_K\equiv1, \qquad {\rm supp}(\eta)\subset U.
}
It follows that $\nu^-(K)=0$, and hence
\eq{\label{eq:19}
    \int_M \eta^2 \rd\nu = \nu(K) + \int_{U\backslash K} \eta^2 \rd\nu \geq \nu^+(K) - \nu^-(U). 
}
Combine \eqref{eq:18} and \eqref{eq:19}, and then let $U\searrow K$. It follows that
\eq{
    \frac{n+1}{2}\omega_n^{\frac{1}{n}} \nu^+(K) \leq \big( 1 + o_\rho(1) \big) \mu(K)^{\frac{n+1}{n}}, \qquad \forall\, K\subset\subset P\cap B_{\rho/2}.
}
It further implies that
\eq{\label{eq:20}
    \frac{n+1}{2}\omega_n^{\frac{1}{n}} \nu^+(E) \leq \big( 1 + o_\rho(1) \big) \mu(E)^{\frac{n+1}{n}}, \qquad \forall\, \hbox{Borel set } E\subset P\cap B_{\rho/2}.
}

Next, we denote by $\mathcal{A}$ the set of atoms of $\mu$, namely
\eq{
    \mathcal{A} \coloneqq \{ x\in M \mid \mu(\{x\})>0 \}, \qquad\hbox{and}\quad \mathcal{B}\coloneqq M\backslash\mathcal{A}.
}
Since $\mu$ is a finite measure, we see that $\mathcal{A}$ is at most countable, and $\mu$ has no atoms on $\mathcal{B}$. Using the Lindel\"of property of the Riemannian manifold $M$, we see that for any $\tau>0$, there exist at most countably many disjoint Borel sets $\{A_l\}$, such that
\eq{
    \mathcal{B} = \bigcup_l A_l, \qquad\hbox{with}\quad \mu(A_l)\leq\tau,
}
and each $A_l$ is contained in a geodesic ball with radius $\rho/2$. Applying \eqref{eq:20} to $E=A_l\cap P$ and summing over $l$, we have
\eq{\label{eq:21}
    \frac{n+1}{2}\omega_n^{\frac{1}{n}}\nu^+(\mathcal{B}) \leq 2\sum_l \mu(A_l\cap P)^{\frac{n+1}{n}} \leq 2\tau^{\frac{1}{n}}\sum_l \mu(A_l\cap P) \leq 2\tau^{\frac{1}{n}}\mu(M),
}
where we used the non-negativity of $\mu$ in the last step. Letting $\tau\to0$ in \eqref{eq:21} implies that $\nu^+(\mathcal{B})=0$. Hence $\nu^+$ is supported on $\mathcal{A}$, which is at most countable.

Finally, we apply \eqref{eq:18} to $\eta_\rho$ supported on $B_\rho$ such that ${\rm supp}(\eta_\rho)\searrow\{x_j\}$ as $\rho\to0$. It follows that
\eq{
    \frac{n+1}{2}\omega_n^{\frac{1}{n}} \nu_j \leq \mu_j^{\frac{n+1}{n}}.
}
Hence we complete the proof.
\end{proof}

We now prove Theorem~\ref{thm:main_2}.

\begin{proof}[Proof of Theorem~\ref{thm:main_2}]
Assume
\eq{\label{eq:assumption}
    Y_{{\rm curl}}(M,[g]) < Y_{{\rm curl}}(\S^n)=\frac{n+1}{2}\omega_n^{\frac{1}{n}}.
}
Recall \eqref{eq:normalization} that
\eq{\label{eq:22}
    1 &= \int_M \alpha_k\w\rd\alpha_k = \int_M (\alpha+\gamma_k)\w\rd(\alpha+\gamma_k) \\
    &= \int_M \alpha\w\rd\alpha + \int_M \gamma_k\w\rd\gamma_k + \int_M \alpha\w\rd\gamma_k + \int_M \gamma_k\w\rd\alpha.
}
Since $\gamma_k\rightharpoonup0$ in $W^{1,\frac{2n}{n+1}}$, we have (1) $\gamma_k\rightharpoonup0$ in $L^{\frac{2n}{n-1}}$, and (2) $\rd\gamma_k\rightharpoonup0$ in $L^{\frac{2n}{n+1}}$. Since $\alpha\in W^{1,\frac{2n}{n+1}}$, we have (3) $\alpha\in L^{\frac{2n}{n-1}}$, and (4) $\rd\alpha\in L^{\frac{2n}{n+1}}$. By H\"older's inequality, (2) and (3) imply that the third integral goes to $0$; (1) and (4) imply that the fourth integral goes to $0$. Hence letting $k\to\infty$ in \eqref{eq:22} yields
\eq{\label{eq:22.1}
    \int_M \alpha\w\rd\alpha + \nu(M) = 1.
}

We now claim that
\eq{\label{eq:23}
    \theta(M) \geq \int_M \abs{\rd\alpha}^{\frac{2n}{n+1}} \rdV_g + \sum_j \mu_j.
}
In fact, let $B_\rho$ be a small ball that shrinks to an atom $\{x_j\}$ as $\rho\to0$, then Minkowski's inequality implies that
\eq{
    \Abs{ \theta(B_\rho)^{\frac{n+1}{2n}} - \mu(B_\rho)^{\frac{n+1}{2n}} } \leq \norm{\rd\alpha}_{L^{\frac{2n}{n+1}}(B_\rho)}.
}
Letting $\rho\to0$ yields that $\theta(\{x_j\})=\mu(\{x_j\})=\mu_j$. Hence the claim \eqref{eq:23} follows from the lower semi-continuity. The normalization \eqref{eq:normalization} then implies that
\eq{\label{eq:24}
    Y_{{\rm curl}}(M,[g]) = \theta(M)^{\frac{n+1}{n}} \geq \Big( \int_M \abs{\rd\alpha}^{\frac{2n}{n+1}} \rdV_g \Big)^{\frac{n+1}{n}} + \Big( \sum_j \mu_j \Big)^{\frac{n+1}{n}}.
}

Finally, we show that \eqref{eq:22.1} and \eqref{eq:24} complete the proof. Using Lemma~\ref{lem4.2} we have
\eq{\label{eq:24.5}
    \Big( \sum_j \mu_j \Big)^{\frac{n+1}{n}} \geq \sum_j \mu_j^{\frac{n+1}{n}} \geq \frac{n+1}{2}\omega_n^{\frac{1}{n}} \nu^+(M) \geq \frac{n+1}{2}\omega_n^{\frac{1}{n}} \nu(M).
}
Together with assumption \eqref{eq:assumption}, we see that $\nu(M)<1$, and hence by \eqref{eq:22.1} we have $\int_M \alpha\w\rd\alpha>0$. Then by definition of $Y_{{\rm curl}}(M,[g])$, we have
\eq{\label{eq:25}
    \Big( \int_M \abs{\rd\alpha}^{\frac{2n}{n+1}} \rdV_g \Big)^{\frac{n+1}{n}} \geq Y_{{\rm curl}}(M,[g]) \int_M \alpha\w\rd\alpha.
}
Combining with \eqref{eq:24}, we see that $\int_M \alpha\w\rd\alpha\leq1$, hence by \eqref{eq:22.1} we have $\nu(M)\geq0$. Now combining \eqref{eq:assumption}, \eqref{eq:22.1}, \eqref{eq:24}, \eqref{eq:24.5}, and \eqref{eq:25} gives
\eq{
    Y_{{\rm curl}}(M,[g]) \geq Y_{{\rm curl}}(M,[g]) \int_M \alpha\w\rd\alpha + \frac{n+1}{2}\omega_n^{\frac{1}{n}} \nu(M) \geq Y_{{\rm curl}}(M,[g]).
}
Hence all equalities hold. Equality in \eqref{eq:25} implies that $\alpha\in W^{1,\frac{2n}{n+1}}(M)$ is a minimizer.

Finally, using the same argument as in our previous work \cite[Subsection 8.3; Appendix C]{WZ26curl}, one obtains the regularity. We leave the details in Appendix~\ref{appendix_regularity}.
\end{proof}

\section{Proof of Theorem~\ref{thm:main_3}}

In this section, we modify Aubin's argument for the classical Yamabe problem to prove Theorem~\ref{thm:main_3}. In the sequel, we assume that $W\not\equiv0$.

\subsection{The Weyl tensor}

The $(0,4)$-type Weyl tensor is defined by
\eq{
    W = {\rm Riem} - \frac{1}{n-2}\Big( {\rm Ric}-\frac{R}{n}g \Big)\KN g - \frac{R}{2n(n-1)}g\KN g,
}
where ${\rm Riem}$ is the Riemann tensor, ${\rm Ric}$ is the Ricci tensor, $R$ is the scalar curvature, and $\KN$ is the Kulkarni--Nomizu product. It is well known that the Weyl tensor $W$ is conformally covariant, and shares the same symmetries as the Riemann tensor, and in addition is trace-free, namely $W_{ijik}=0$. For $n\leq3$, $W$ vanishes identically. For $n\geq4$, $W\equiv0$ if and only if $(M,g)$ is locally conformally flat.

Given a point $x_0\in M$ with $W(x_0)\neq0$, it is well known that one can choose a conformal metric $\tilde{g}\in[g]$ with $\tilde{g}(x_0)=g(x_0)$, such that in the normal coordinates in a small geodesic ball $B_r^M(x_0)$,
\eq{\label{eq:25.1}
    \tilde{g}_{ij} = \delta_{ij} - \frac{1}{3}W_{ikjl}(x_0)y^ky^l + O(\abs{y}^3), \qquad \rdV_{\tilde{g}} = (1+O(\abs{y}^3))\rd y.
}

We now compute the first variation of the norm of differential forms. For $p$-forms $\alpha,\beta$, we denote the contraction by
\eq{\label{eq:25.1.1}
    (\alpha\cdot\beta)_{ij}\coloneq \<e_i\ip\alpha,e_j\ip\beta\> = \frac{1}{(p-1)!}\alpha_{ik_2\cdots k_p}\beta_{jk_2\cdots k_p}.
}

\begin{lemma}
Given any $p$-form $\xi$ and any symmetric $2$-tensor $h$, we have
\eq{\label{eq:25.2}
    \frac{\rd}{\rd t}\Big|_{t=0} \abs{\xi}_{\delta+th}^{\frac{2n}{n+1}} = -\frac{n}{n+1}\abs{\xi}_\delta^{-\frac{2}{n+1}} h^{ij}(\xi\cdot\xi)_{ij}.
}
\end{lemma}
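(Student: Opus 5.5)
The identity is pointwise linear algebra: the coefficients $\xi_{i_1\cdots i_p}$ stay fixed while only the metric $g_t=\delta+th$ varies. We therefore reduce it to computing $\frac{\rd}{\rd t}\big|_{t=0}\abs{\xi}^2_{g_t}$ and then apply the chain rule. In coordinates,
\eq{
    \abs{\xi}_{g_t}^2 = \frac{1}{p!}\, g_t^{i_1j_1}\cdots g_t^{i_pj_p}\,\xi_{i_1\cdots i_p}\xi_{j_1\cdots j_p},
}
and the inverse metric satisfies $\frac{\rd}{\rd t}\big|_{t=0} g_t^{ij} = -h^{ij}$, since $(\delta+th)^{-1}=\delta-th+O(t^2)$.

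Differentiating the product of $p$ inverse-metric factors gives $p$ terms. Each term replaces one factor $\delta^{i_aj_a}$ by $-h^{i_aj_a}$. By the antisymmetry of $\xi$, all $p$ terms are equal after relabelling indices, so
\eq{
    \frac{\rd}{\rd t}\Big|_{t=0}\abs{\xi}_{g_t}^2 = -\frac{p}{p!}\, h^{ij}\xi_{ik_2\cdots k_p}\xi_{jk_2\cdots k_p} = -h^{ij}(\xi\cdot\xi)_{ij},
}
using $p/p!=1/(p-1)!$ and the definition \eqref{eq:25.1.1}.

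Next, write $\abs{\xi}^{\frac{2n}{n+1}}=(\abs{\xi}^2)^{\frac{n}{n+1}}$. The chain rule gives
\eq{
    \frac{\rd}{\rd t}\Big|_{t=0}\abs{\xi}_{g_t}^{\frac{2n}{n+1}} = \frac{n}{n+1}\,\abs{\xi}_\delta^{2(\frac{n}{n+1}-1)}\cdot\Big(-h^{ij}(\xi\cdot\xi)_{ij}\Big) = -\frac{n}{n+1}\abs{\xi}_\delta^{-\frac{2}{n+1}}h^{ij}(\xi\cdot\xi)_{ij},
}
which is \eqref{eq:25.2}.

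The only point needing care is the case $\xi=0$, where the power $\abs{\xi}^{-\frac{2}{n+1}}$ is singular. There we interpret both sides as $0$, which is justified because $\abs{\xi}_{g_t}=0$ for all $t$. The remaining steps are bookkeeping: checking the combinatorial factor $p/p!$ and the sign of the variation of the inverse metric. The factor $p/p!$ is consistent with the normalization $\abs{\xi}^2=\frac{1}{p!}\sum\xi_I^2$ and with $(\xi\cdot\xi)_{ij}=\langle e_i\ip\xi,e_j\ip\xi\rangle$. As a sanity check, take $h=\delta$. Then $\delta^{ij}(\xi\cdot\xi)_{ij}=p\abs{\xi}^2$, and the formula correctly gives $\frac{\rd}{\rd t}\abs{\xi}^2_{(1+t)\delta}=-p\abs{\xi}^2$.
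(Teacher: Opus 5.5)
Your proposal is correct and follows the paper's proof essentially verbatim. Both expand $\abs{\xi}^2_{\delta+th}$ in coordinates, use $\frac{\rd}{\rd t}\big|_{t=0}(\delta+th)^{ij}=-h^{ij}$ and the antisymmetry of $\xi$ to collect the $p$ equal terms into $-h^{ij}(\xi\cdot\xi)_{ij}$, and then apply the chain rule; your treatment of $\xi=0$ and the check with $h=\delta$ are correct additions that the paper omits.
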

\begin{proof}
Note that
\eq{\label{eq:26}
    \abs{\xi}_{\delta+th}^2 = \frac{1}{p!}(\delta+th)^{i_1j_1}\cdots (\delta+th)^{i_pj_p}\xi_{i_1\cdots i_p}\xi_{j_1\cdots j_p}.
}
Since
\eq{
    \frac{\rd}{\rd t}\Big|_{t=0} (\delta+th)^{ij} = -h^{ij},
}
differentiating \eqref{eq:26} gives
\eq{
    \frac{\rd}{\rd t}\Big|_{t=0} \abs{\xi}_{\delta+th}^2 = -\frac{p}{p!}h^{i_1j_1}\delta^{i_2j_2}\cdots\delta^{i_pj_p}\xi_{i_1\cdots i_p}\xi_{j_1\cdots j_p} = -h^{ij}(\xi\cdot\xi)_{ij}.
}
The claim then follows by the chain rule.
\end{proof}

\begin{corollary}
In the small geodesic ball $B_r^M(x_0)$, we have
\eq{\label{eq:27}
    \abs{\xi}_{\tilde{g}}^{\frac{2n}{n+1}} = \abs{\xi}_\delta^{\frac{2n}{n+1}} + \frac{n}{3(n+1)}W_{ikjl}(x_0)\abs{\xi}^{-\frac{2}{n+1}}(\xi\cdot\xi)^{ij}y^ky^l + O(\abs{y}^3\abs{\xi}^{\frac{2n}{n+1}}) + O(\abs{y}^4\abs{\xi}^{\frac{2n}{n+1}}).
}
\end{corollary}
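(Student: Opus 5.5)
The corollary follows by inserting the metric expansion \eqref{eq:25.1} into the variational formula \eqref{eq:25.2} and controlling the remainder with a Taylor expansion in the metric variable. Concretely, write $\tilde g = \delta + h$ on $B_r^M(x_0)$, where
\begin{equation*}
h_{ij} = -\tfrac13 W_{ikjl}(x_0)y^ky^l + O(\abs{y}^3), \qquad \abs{h} = O(\abs{y}^2).
\end{equation*}
Since $\abs{\xi}_{\delta+th}^{2n/(n+1)}$ is a smooth function of $t$ near $0$ for $\xi\neq0$ (at $\xi=0$ both sides vanish, so that case is trivial), we have
\begin{equation*}
\abs{\xi}_{\tilde g}^{\frac{2n}{n+1}} = \abs{\xi}_\delta^{\frac{2n}{n+1}} + \frac{\rd}{\rd t}\Big|_{t=0}\abs{\xi}_{\delta+th}^{\frac{2n}{n+1}} + R(\xi,h),
\end{equation*}
where $R$ is the second-order Taylor remainder.

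The first-order term comes from \eqref{eq:25.2}. Because $h$ is symmetric with indices raised by $\delta$ at $t=0$, it equals $-\frac{n}{n+1}\abs{\xi}^{-\frac{2}{n+1}}h^{ij}(\xi\cdot\xi)_{ij}$. Substituting the quadratic part of $h$ gives exactly the term $\frac{n}{3(n+1)}W_{ikjl}(x_0)\abs{\xi}^{-\frac{2}{n+1}}(\xi\cdot\xi)^{ij}y^ky^l$. The $O(\abs{y}^3)$ part of $h$ contributes at most $O(\abs{y}^3)\abs{\xi}^{-\frac{2}{n+1}}\abs{\xi\cdot\xi}$, and this is $O(\abs{y}^3\abs{\xi}^{\frac{2n}{n+1}})$ because $\abs{(\xi\cdot\xi)_{ij}}\lesssim\abs{\xi}^2$.

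The only step needing care is the second-order remainder. I would use homogeneity: $F(\xi,t)\coloneqq\abs{\xi}_{\delta+th}^{2n/(n+1)}$ is homogeneous of degree $\frac{2n}{n+1}$ in $\xi$. On the compact set $\{\abs{\xi}_\delta=1\}$ and for $\abs{t}\abs{h}\le \frac12$, the second $t$-derivative is bounded by $C\abs{h}^2$. This holds because $\abs{\xi}_{\delta+th}^2$ is a polynomial in $(\delta+th)^{-1}$ bounded away from zero, so its $\frac{n}{n+1}$-power is smooth with controlled derivatives. Rescaling then gives $\abs{R(\xi,h)}\le C\abs{h}^2\abs{\xi}^{\frac{2n}{n+1}} = O(\abs{y}^4\abs{\xi}^{\frac{2n}{n+1}})$, with $C$ uniform since $h$ is small on $B_r$ for $r$ small. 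This produces the last error term and completes the expansion \eqref{eq:27}.
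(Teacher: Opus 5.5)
Your proposal is correct and follows the paper's argument, which likewise inserts $h_{ij}=-\frac13 W_{ikjl}(x_0)y^ky^l+O(\abs{y}^3)$ from \eqref{eq:25.1} into the first-variation formula \eqref{eq:25.2}. Your homogeneity bound on the second-order Taylor remainder, $\abs{R}\le C\abs{h}^2\abs{\xi}^{\frac{2n}{n+1}}$, makes explicit the $O(\abs{y}^4\abs{\xi}^{\frac{2n}{n+1}})$ term that the paper leaves implicit.
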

\begin{proof}
It follows from \eqref{eq:25.1} and \eqref{eq:25.2} by choosing $h_{ij}=-\frac{1}{3}W_{ikjl}(x_0)y^ky^l+O(\abs{y}^3)$.
\end{proof}

\begin{lemma}\label{lem5.3}
Let $\beta_\epsilon$ be defined as in \eqref{eq:1.6}.
Then we have the second-order expansion
\eq{
    J_{\tilde{g}}(\beta_\epsilon) = \frac{n+1}{2}\omega_n^{\frac{1}{n}}\Big( 1-\frac{c_W(v_0)}{3\omega_n}\epsilon^2 + o(\epsilon^2) \Big),
}
where 
\eq{\label{eq:27.1}
    c_W(v_0)\coloneqq W_{ikjl}(x_0)\int_{\R^n}\frac{2}{1+\abs{y}^2}(v_0\cdot v_0)^{ij}y^ky^l \rd y.
}
\end{lemma}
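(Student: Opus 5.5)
The plan is to expand numerator and denominator of $J_{\tilde g}(\beta_\epsilon)$ separately in $\epsilon$, keeping the $\epsilon^2$ term coming from the Weyl part of \eqref{eq:25.1}.

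\emph{Denominator.} The quantity $\int_M\beta_\epsilon\w\rd\beta_\epsilon$ involves no metric, so \eqref{eq:2.1} applies verbatim for $\tilde g$ and gives $D_0+O(\epsilon^n)$, where $D_0\coloneqq\int_{\R^n}v_0\w\rd v_0$. Since $n\ge7$, the error $O(\epsilon^n)$ is $o(\epsilon^2)$. Because $v_0$ is an extremal, $A_0^{\frac{n+1}{n}}/D_0=\frac{n+1}{2}\omega_n^{\frac1n}$, where $A_0\coloneqq\int_{\R^n}\abs{\rd v_0}_\delta^{\frac{2n}{n+1}}\rd y$.

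\emph{Numerator.} I split $M$ into $B_{r/2}$, where $\beta_\epsilon=v_\epsilon$, and the annulus, where \eqref{eq:4}, \eqref{eq:5} and \eqref{eq:9} give a contribution $O(\epsilon^{\frac{n(n-1)}{n+1}})=o(\epsilon^2)$. On $B_{r/2}$ I apply \eqref{eq:27} with $\xi=\rd v_\epsilon$, together with $\rdV_{\tilde g}=(1+O(\abs{y}^3))\rd y$. The main term $\int_{B_{r/2}}\abs{\rd v_\epsilon}_\delta^{\frac{2n}{n+1}}$ equals $A_0-O(\epsilon^{n/2\cdot})$ by \eqref{eq:8}-type tail bounds, which is $o(\epsilon^2)$. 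For the error terms I substitute $y=\epsilon z$ and use $\abs{\rd v_0}^{\frac{2n}{n+1}}\lesssim(1+\abs{z}^2)^{-n}$. The $O(\abs{y}^3)$ and $O(\abs{y}^4)$ terms then give $\epsilon^3\int_{B_{r/\epsilon}}\abs{z}^3(1+\abs{z}^2)^{-n}\rd z$ and $\epsilon^4\int_{B_{r/\epsilon}}\abs{z}^4(1+\abs{z}^2)^{-n}\rd z$. Both integrals converge since $n\ge7$, so both terms are $o(\epsilon^2)$. Under the same rescaling, the Weyl term becomes
\begin{equation*}
\epsilon^2\,\frac{n}{3(n+1)}\,W_{ikjl}(x_0)\int_{\R^n}\abs{\rd v_0}^{-\frac{2}{n+1}}(\rd v_0\cdot\rd v_0)^{ij}z^kz^l\,\rd z+o(\epsilon^2),
\end{equation*}
where truncating the $z$-integral to $B_{r/\epsilon}$ costs only $O(\epsilon^{n-2})$.

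\emph{Identifying the Weyl coefficient.} This is the step I expect to be the real work. I use the Killing equation \eqref{eq:0.3} pulled back by $\Psi$. With $\sigma=\frac{2}{1+\abs{z}^2}$, \eqref{eq:0.2} and \eqref{conformal_change} with $n-2p=1$ give $*_{\Psi^*g_{\rm st}}=\sigma*_\delta$, hence
\begin{equation*}
\rd v_0=\tfrac{n+1}{2}\,\sigma\,*_\delta v_0 .
\end{equation*}
Consequently $\abs{\rd v_0}_\delta=\frac{n+1}{2}\sigma^{\frac{n+1}{2}}$, using \eqref{eq:0.2.1}. Next I need the pointwise contraction identity
\begin{equation*}
(*\eta\cdot*\eta)_{ij}=\abs{\eta}^2\delta_{ij}-(\eta\cdot\eta)_{ij}\quad\text{for }\eta\in\Omega^p,
\end{equation*}
which I would verify from Proposition~\ref{basic_form}(1),(3): write $e_i\ip*\eta=\pm*(e^i\w\eta)$ and expand $\<e^i\w\eta,e^j\w\eta\>$. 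Since $W$ is trace-free, the $\delta_{ij}$ term drops out. Collecting the powers of $\sigma$ leaves
\begin{equation*}
-\big(\tfrac{n+1}{2}\big)^{\frac{2n}{n+1}}\sigma\,(v_0\cdot v_0)^{ij}
\end{equation*}
as the integrand factor multiplying $W_{ikjl}z^kz^l$. This gives
\begin{equation*}
\int_M\abs{\rd\beta_\epsilon}_{\tilde g}^{\frac{2n}{n+1}}\rdV_{\tilde g}=A_0-\frac{n}{3(n+1)}\big(\tfrac{n+1}{2}\big)^{\frac{2n}{n+1}}c_W(v_0)\,\epsilon^2+o(\epsilon^2).
\end{equation*}

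\emph{Conclusion.} Since $A_0=\big(\tfrac{n+1}{2}\big)^{\frac{2n}{n+1}}\int\sigma^n=\big(\tfrac{n+1}{2}\big)^{\frac{2n}{n+1}}\omega_n$, raising the numerator to the power $\frac{n+1}{n}$ produces the factor $1-\frac{c_W(v_0)}{3\omega_n}\epsilon^2+o(\epsilon^2)$. Dividing by $D_0(1+o(\epsilon^2))$ then yields the stated expansion.

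The sign bookkeeping in the contraction identity and in \eqref{eq:27} is the main place where errors could enter. I would double-check it by computing the trace: $\sum_i(*\eta\cdot*\eta)_{ii}=(n-p)\abs{\eta}^2$ must equal $n\abs{\eta}^2-p\abs{\eta}^2$.
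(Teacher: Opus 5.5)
Your proposal is correct and follows essentially the same route as the paper. Both arguments use the metric-free denominator from \eqref{eq:2.1}, split the numerator into $B_{r/2}$ and the annulus, apply \eqref{eq:27} with $\xi=\rd v_\epsilon$, and identify the Weyl coefficient via $\rd v_0=\frac{n+1}{2}\frac{2}{1+\abs{y}^2}*_\delta v_0$, the identity $(*v_0\cdot *v_0)_{ij}=\abs{v_0}^2\delta_{ij}-(v_0\cdot v_0)_{ij}$ and the trace-free property of $W$. Your rescaling $y=\epsilon z$, which shows the $O(\abs{y}^3)$ and $O(\abs{y}^4)$ remainders are $o(\epsilon^2)$ using $n\ge 7$, simply spells out what the paper records tersely in \eqref{eq:28}.
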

\begin{proof}
Choose $\xi=\rd\beta_\epsilon$ in \eqref{eq:27}. Using \eqref{eq:25.1}, and integrating on $B_{r/2}$ (on which $\eta\equiv1$), we have
\eq{\label{eq:28}
\int_{B_{r/2}^M(x_0)} \abs{\rd\beta_\epsilon}_{\tilde{g}}^{\frac{2n}{n+1}} \rdV_{\tilde{g}} = \int_{B_{r/2}} \abs{\rd v_\epsilon}_\delta^{\frac{2n}{n+1}} \rd y + \frac{n}{3(n+1)}W_{ikjl}(x_0) \int_{B_{r/2}} \abs{\rd v_\epsilon}^{-\frac{2}{n+1}}(\rd v_\epsilon\cdot\rd v_\epsilon)^{ij}y^ky^l \rd y + o(\epsilon^2).
}
Using \eqref{eq:1.1} and \eqref{eq:8} we see that (for sufficiently small $\epsilon>0$)
\eq{\label{eq:29}
    \int_{B_{r/2}} \abs{\rd v_\epsilon}_\delta^{\frac{2n}{n+1}} \rd y = \int_{\R^n} \abs{\rd v_0}_\delta^{\frac{2n}{n+1}} \rd y + o(\epsilon^2)
}
and
\eq{\label{eq:30}
    \int_{B_{r/2}} \abs{\rd v_\epsilon}_\delta^{\frac{2n}{n+1}}\abs{y}^2 \rd y = \epsilon^2\int_{\R^n} \abs{\rd v_0}_\delta^{\frac{2n}{n+1}}\abs{y}^2 \rd y + o(\epsilon^2).
}
Using \eqref{eq:3}, \eqref{eq:4}, and \eqref{eq:5} we see that
\eq{\label{eq:31}
    \int_{B_r^M(x_0)\backslash B_{r/2}^M(x_0)} \abs{\rd \beta_\epsilon}_{\tilde{g}}^{\frac{2n}{n+1}} \rdV_{\tilde{g}} = o(\epsilon^2).
}
Combining \eqref{eq:28}, \eqref{eq:29}, \eqref{eq:30}, and \eqref{eq:31} gives 
\eq{\label{eq:31.1}
    \int_M \abs{\rd\beta_\epsilon}_{\tilde{g}}^{\frac{2n}{n+1}} \rdV_{\tilde{g}} = \int_{\R^n} \abs{\rd v_0}_\delta^{\frac{2n}{n+1}} \rd y + \frac{n\epsilon^2}{3(n+1)}W_{ikjl}(x_0) \int_{\R^n} \abs{\rd v_0}^{-\frac{2}{n+1}}(\rd v_0\cdot\rd v_0)^{ij}y^ky^l \rd y + o(\epsilon^2).
}
Recall \eqref{eq:0.2.1} that
\eq{\label{eq:32}
    \abs{v_0}_\delta = \Big( \frac{2}{1+\abs{y}^2} \Big)^{\frac{n-1}{2}}.
}
Using \eqref{conformal_change}, \eqref{eq:0.1}, \eqref{eq:0.2}, and \eqref{eq:0.3} we have
\eq{\label{eq:33}
    \rd v_0 = \Psi^*\rd\alpha_0 = \frac{n+1}{2}\Psi^*(*_{\S^n}\alpha_0) = \frac{n+1}{2} *_{\big(\frac{2}{1+\abs{y}^2}\big)^2\delta}v_0 = \frac{n+1}{2}\cdot\frac{2}{1+\abs{y}^2}*_\delta v_0.
}
Hence
\eq{\label{eq:33.1}
    \int_{\R^n} \abs{\rd v_0}_\delta^{\frac{2n}{n+1}} \rd y = \Big(\frac{n+1}{2}\Big)^{\frac{2n}{n+1}} \int_{\R^n} \Big(\frac{2}{1+\abs{y}^2}\Big)^n \rd y = \Big(\frac{n+1}{2}\Big)^{\frac{2n}{n+1}}\omega_n.
}
By definition \eqref{eq:25.1.1} we have
\eq{\label{eq:34}
    (*_\delta v_0\cdot *_\delta v_0)_{ij} &= \<e_i\ip*_\delta v_0,e_j\ip*_\delta v_0\> = \<e^i\w v_0, e^j\w v_0\> = \<v_0,e_i\ip e^j\w v_0\> \\
    &= \<v_0,\delta_{ij}v_0-e^j\w e_i\ip v_0\> = \abs{v_0}_\delta^2\delta_{ij} - \<e_j\ip v_0,e_i\ip v_0\> = \abs{v_0}_\delta^2\delta_{ij} - (v_0\cdot v_0)_{ij},
}
where we used Proposition~\ref{basic_form} (1)(2) and (3). Combining \eqref{eq:32}, \eqref{eq:33}, and \eqref{eq:34} gives
\eq{
    \abs{\rd v_0}^{-\frac{2}{n+1}}(\rd v_0\cdot\rd v_0)^{ij} = \Big(\frac{n+1}{2}\Big)^{\frac{2n}{n+1}} \frac{2}{1+\abs{y}^2} \big( \abs{v_0}_\delta^2\delta^{ij} - (v_0\cdot v_0)^{ij} \big).
}
By the trace-free property of the Weyl tensor, we have
\eq{
    W_{ikjl}(x_0)\delta^{ij}=0.
}
Hence
\eq{\label{eq:35}
    W_{ikjl}(x_0)\int_{\R^n} \abs{\rd v_0}^{-\frac{2}{n+1}}(\rd v_0\cdot\rd v_0)^{ij}y^ky^l \rd y = -\Big(\frac{n+1}{2}\Big)^{\frac{2n}{n+1}}c_W(v_0).
}
Combining \eqref{eq:31.1}, \eqref{eq:33.1}, and \eqref{eq:35} gives
\eq{
    \int_M \abs{\rd\beta_\epsilon}_{\tilde{g}}^{\frac{2n}{n+1}} \rdV_{\tilde{g}} = \Big(\frac{n+1}{2}\Big)^{\frac{2n}{n+1}}\omega_n - \Big(\frac{n+1}{2}\Big)^{\frac{2n}{n+1}}\frac{n\epsilon^2}{3(n+1)}c_W(v_0) + o(\epsilon^2).
}
Hence
\eq{\label{eq:36}
    \Big( \int_M \abs{\rd\beta_\epsilon}_{\tilde{g}}^{\frac{2n}{n+1}} \rdV_{\tilde{g}} \Big)^{\frac{n+1}{n}} = \Big(\frac{n+1}{2}\Big)^2\omega_n^{\frac{n+1}{n}} \Big( 1 - \frac{c_W(v_0)}{3\omega_n}\epsilon^2 + o(\epsilon^2) \Big).
}
For the denominator, recall \eqref{eq:2.1} that
\eq{\label{eq:37}
    \int_M \beta_\epsilon\w\rd\beta_\epsilon = \int_{\R^n} v_0\w\rd v_0 + o(\epsilon^2) = \frac{n+1}{2}\omega_n + o(\epsilon^2).
}
The Lemma follows from \eqref{eq:36} and \eqref{eq:37}.
\end{proof}

\subsection{A distinguished Euclidean minimizer}\label{sec5.2}

We now show that there exists a Euclidean minimizer $v_0$ such that $c_W(v_0)>0$.

First, we claim that each oriented $\frac{n-1}{2}$-dimensional subspace $V\subset T_{x_0}M\cong\R^n$ induces a Euclidean minimizer. In fact, given $V$, let $\omega_V$ be the unit volume form of $V$. Define an $(n+1)/2$-form
\eq{\label{eq:37.1}
    \Omega_V \coloneqq e^\flat\w\omega_V + \sigma *_{\R^n}\omega_V, 
}
where $e^\flat$ is a constant $1$-form, which is dual to the unit normal vector $e$ at $x_0$, and $\sigma\in\{\pm1\}$ to be determined. It is clear that
\eq{
    *_{\R^{n+1}}(e^\flat\w\omega_V) = \pm *_{\R^n}\omega_V,
}
and 
hence $\Omega_V$ is a self-dual or anti self-dual $\frac{n+1}{2}$-form in $\R^{n+1}$, determined by the choice of $\sigma$. Set
\eq{
    \alpha_V \coloneqq \iota^*(x\ip\Omega_V),
}
where $\iota:\S^n\to\R^{n+1}$ is the inclusion, and $x$ is the position vector. Since $\Omega_V$ is a constant $\frac{n+1}{2}$-form, using Proposition~\ref{basic_form} (4) we have
\eq{
    \rd\alpha_V = \iota^*\rd(x\ip\Omega_V) = \iota^*\mathcal{L}_x\Omega_V = \frac{n+1}{2}\iota^*\Omega_V,
}
and hence
\eq{
    \curl\alpha_V = \frac{n+1}{2}*_{\S^n}\iota^*\Omega_V = \pm\frac{n+1}{2}\alpha_V. 
}
Therefore, we may choose $\sigma\in\{\pm1\}$ such that $\curl\alpha_V = \frac{n+1}{2}\alpha_V$. Moreover, it is clear that
\eq{
    \abs{\alpha_V}\equiv1, \qquad \abs{\rd\alpha_V}\equiv\frac{n+1}{2}.
}
Hence $\alpha_V$ is a spherical minimizer. Let $v_V$ be the associated Euclidean minimizer as in \eqref{eq:0.1}, namely
\eq{
    v_V = \Psi^*\alpha_V.
}
For a more explicit form of $v_V$, see the proof of the following Lemma.
Denote by $P_V$ the orthogonal projection from $ T_{x_0}M\cong\R^n$ onto $V$, and $Q_V\coloneqq \rid-P_V$. We now prove a key identity.

\begin{lemma}\label{lem5.4}
For each oriented $\frac{n-1}{2}$-dimensional subspace $V\subset T_{x_0}M\cong\R^n$, we have
\eq{
    c_W(v_V) = \frac{6\omega_n}{n(n+1)}W(P_V,P_V).
}
Here we define $W(A,B)\coloneqq W_{ikjl}A^{ij}B^{kl}$.
\end{lemma}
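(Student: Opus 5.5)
The plan is to compute $v_V=\Psi^*\alpha_V$ explicitly as a form on $\R^n$ whose coefficients are polynomial in $y$ times powers of $\lambda(y)\coloneqq\frac{2}{1+\abs{y}^2}$. From that we read off $(v_V\cdot v_V)_{ij}$ as a quadratic expression in $y$ with coefficients built from $P_V$ and $Q_V$. Inserting this into \eqref{eq:27.1} reduces $c_W(v_V)$ to radial integrals of fourth moments $\int y^a y^b y^k y^l f(\abs{y})\,\rd y$. These are handled by the usual symmetrization $\int y^ay^by^ky^l f=\frac{\int\abs{y}^4 f}{n(n+2)}(\delta^{ab}\delta^{kl}+\delta^{ak}\delta^{bl}+\delta^{al}\delta^{bk})$ and $\int y^ky^l f=\frac{\int\abs{y}^2f}{n}\delta^{kl}$.

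\emph{Step 1: the explicit form of $v_V$.} Write $e=e_{n+1}$ and $x=\Psi(y)$, so that $x_i=\lambda y_i$ for $i\le n$ and $x_{n+1}=\frac{\abs{y}^2-1}{\abs{y}^2+1}$. Then
\[
x\ip(e^\flat\w\omega_V)=x_{n+1}\omega_V-e^\flat\w(x\ip\omega_V),
\]
and $x\ip *_{\R^n}\omega_V$ only involves the first $n$ components of $x$. To pull these back under $\Psi$, I will use $\Psi^*\rd x_{n+1}=\lambda^2\, y\cdot\rd y$ and $\Psi^*\rd x_i=\lambda\,\rd y_i-\lambda^2 y_i\,(y\cdot \rd y)$. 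Expanding, I expect
\[
v_V=\lambda^{\frac{n+1}{2}}\Big(a(y)\,\omega_V+b\,y^\flat\w(y\ip\omega_V)+c\,y\ip *_\delta\omega_V+d\,y^\flat\w(y\ip *_\delta\omega_V)\Big)
\]
up to lower-order bookkeeping, with $a$ a polynomial in $\abs{y}^2$ and $b,c,d$ constants times powers of $\lambda$. Many of these terms should collapse; for instance, $y^\flat\w(y\ip *\omega_V)$ is related to $*(y\ip(y^\flat\w\omega_V))$. As a consistency check, the result must satisfy $\abs{v_V}_\delta=\lambda^{\frac{n-1}{2}}$ by \eqref{eq:0.2.1}.

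\emph{Step 2: the contraction $(v_V\cdot v_V)_{ij}$.} I will compute it in a basis adapted to $V\oplus V^\perp$, using three elementary facts:
\begin{itemize}
\item $(\omega_V\cdot\omega_V)_{ij}=(P_V)_{ij}$;
\item $(*\omega_V\cdot *\omega_V)_{ij}=(Q_V)_{ij}$;
\item interior products $y\ip\omega_V$ produce $P_Vy$, while $y\ip *\omega_V$ produces $Q_Vy$.
\end{itemize}
The resulting quantity has the form
\[
A(\abs{y})P_V+B(\abs{y})Q_V+C_1\,(P_Vy)(P_Vy)^T+C_2\,(Q_Vy)(Q_Vy)^T+\text{(mixed terms)} .
\]

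\emph{Step 3: integration and trace-freeness.} The terms $A P_V$ and $B Q_V$ contribute multiples of $W_{ikjl}P^{ij}\delta^{kl}$ and $W_{ikjl}Q^{ij}\delta^{kl}$, which vanish because $W$ is trace-free. The quartic terms give multiples of $W(P_V,P_V)$, $W(Q_V,Q_V)$, and $W(P_V,Q_V)$, with the $\delta\delta$ pairings again killed by trace-freeness. Since $P_V+Q_V=\rid$ and $W(\rid,\cdot)=0$, we have $W(Q_V,Q_V)=W(P_V,P_V)=-W(P_V,Q_V)$, so everything becomes a single multiple of $W(P_V,P_V)$. The last step is to evaluate the resulting radial integrals of powers of $\lambda$ times $\abs{y}^4$ via Beta functions. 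These should be normalized against $\int\lambda^n\,\rd y=\omega_n$ using the recursion $\int\lambda^{m}\abs{y}^2\,\rd y$ vs.\ $\int\lambda^m\,\rd y$, which gives the constant $\frac{6\omega_n}{n(n+1)}$.

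\emph{Main obstacle.} I expect Step 1 to be the main obstacle: producing a clean closed form for $\Psi^*(x\ip\Omega_V)$, with correct signs and with the cross terms between the $e^\flat\w\omega_V$ and $*\omega_V$ pieces resolved. These signs determine whether the $P$- and $Q$-contributions add or cancel. To control this, I would first verify the formula at $y=0$ and along a ray, checking $\abs{v_V}=\lambda^{\frac{n-1}{2}}$ and $\rd v_V=\frac{n+1}{2}\lambda *_\delta v_V$ (cf.\ \eqref{eq:33}). Only then would I carry out the contraction.
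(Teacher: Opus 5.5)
Your overall route is the paper's: an explicit formula for $v_V$, then the contraction, spherical moments, trace-freeness and a Beta integral. The paper gets $v_V$ from $\Psi(y)=R_y(e)$ and $\rd\Psi_y=\lambda R_y$, with $R_y$ the reflection in $(y-e)^\perp$, rather than from pulled-back coordinate differentials. Either way one finds, with $\lambda=\frac{2}{1+\abs{y}^2}$,
\[
v_V=\lambda^{\frac{n+1}{2}}\Big(\tfrac{\abs{y}^2-1}{2}\,\omega_V-y^\flat\w(y\ip\omega_V)+\sigma\,y\ip*_\delta\omega_V\Big).
\]
So your $d$-term is absent; it has degree $\frac{n+1}{2}$ anyway.

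The real problem is in Steps 2--3, and it changes the constant. The coefficients of $P_V$ and $Q_V$ are \emph{not} radial, and those terms do \emph{not} integrate to zero. Write $p=P_Vy$ and $q=Q_Vy$. Then $y^\flat\w(y\ip\omega_V)=\abs{p}^2\omega_V+q^\flat\w(p\ip\omega_V)$ and $y\ip*_\delta\omega_V=q\ip*_\delta\omega_V$. Hence $S\coloneqq\lambda^{1-n}(v_V\cdot v_V)$ has $P_V$-coefficient $A=(1-\lambda-\lambda\abs{p}^2)^2+\lambda^2\abs{p}^2\abs{q}^2$. Its $Q_V$-coefficient is $B=\lambda^2\abs{q}^2$, coming from $(q\ip*\omega_V)\cdot(q\ip*\omega_V)=\abs{q}^2Q_V-q\otimes q$.

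Trace-freeness only lets you merge these two terms. Since $W(\rid,y\otimes y)=0$, they contribute $(A-B)\,W(P_V,y\otimes y)$. Using $\abs{p}^2+\abs{q}^2=\abs{y}^2$ and $\lambda(1+\abs{y}^2)=2$, one gets $A-B=1-2\lambda^2\abs{q}^2$.

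The constant $1$ dies on averaging, but the rest does not. By the fourth-moment formula, $\fint_{\S^{n-1}}\abs{Q_V(r\theta)}^2W(P_V,r\theta\otimes r\theta)=-\frac{2r^4}{n(n+2)}W(P_V,P_V)$. So this piece contributes $\frac{4\lambda^2r^4}{n(n+2)}W(P_V,P_V)$ to the spherical mean of $W(S,r\theta\otimes r\theta)$. That is two thirds of the correct total $\frac{6\lambda^2r^4}{n(n+2)}W(P_V,P_V)$.

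The $p\otimes p$, $q\otimes q$ and mixed terms collapse, by the same identities, to $-2\lambda^2W(p\otimes p,q\otimes q)$ and supply the remaining third. Your Step 3 as written would therefore give $\frac{2\omega_n}{n(n+1)}W(P_V,P_V)$ instead of $\frac{6\omega_n}{n(n+1)}W(P_V,P_V)$.

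Relatedly, before these simplifications $W(S,y\otimes y)$ contains degree-six terms. Examples are $\lambda^2\abs{p}^2\abs{q}^2W(P_V,y\otimes y)$ and $\lambda^2\abs{q}^2W(p\otimes p,q\otimes q)$. The reduction to fourth moments is valid only after using $\lambda\abs{y}^2=2-\lambda$; otherwise you also need sixth moments.

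If you keep the non-radial coefficients and do this merging, the rest of your plan works. This includes the Beta-function normalization $\omega_{n-1}\int_0^\infty\lambda^{n+2}r^{n+3}\,\rd r=\frac{n+2}{n+1}\,\omega_n$, which gives the stated constant.
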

\begin{proof}
Given any $y\in\R^n$, recall \eqref{eq:0.1.1} that
\eq{\label{eq:38}
    \Psi(y) = e + \frac{2}{1+\abs{y}^2}(y-e).
}
Hence for any $w\in T_y\R^n\cong\R^n$,
\eq{\label{eq:39}
    \rd\Psi_y(w) = \frac{2}{1+\abs{y}^2}w - \frac{4}{(1+\abs{y}^2)^2}\<y,w\>(y-e) = \frac{2}{1+\abs{y}^2}\Big( w - \frac{2}{1+\abs{y}^2}\<y-e,w\>(y-e) \Big),
}
where we used $e\perp w$. Note that $\abs{y-e}^2=1+\abs{y}^2$, hence
\eq{
    R_y \coloneqq \rid - \frac{2}{1+\abs{y}^2}(y-e)\otimes(y-e) = \rid - 2\frac{y-e}{\abs{y-e}}\otimes\frac{y-e}{\abs{y-e}}
}
is the reflection with respect to the hyperplane $(y-e)^\perp$. Now \eqref{eq:38} and \eqref{eq:39} become
\eq{
    \Psi(y)=R_y(e), \qquad \rd\Psi_y = \frac{2}{1+\abs{y}^2}R_y|_{\R^n}.
}
For any $w_1,\cdots,w_{\frac{n-1}{2}}\in\R^n$, we have
\eq{
    v_V|_y(w_1,\cdots,w_{\frac{n-1}{2}}) &= (\Psi^*\alpha_V)|_y(w_1,\cdots,w_{\frac{n-1}{2}}) = \alpha_V|_{\Psi(y)}(\rd\Psi_y(w_1),\cdots,\rd\Psi_y(w_{\frac{n-1}{2}})) \\
    &= \Omega_V|_{\Psi(y)}(\Psi(y),\rd\Psi_y(w_1),\cdots,\rd\Psi_y(w_{\frac{n-1}{2}})) \\
    &= \Big( \frac{2}{1+\abs{y}^2} \Big)^{\frac{n-1}{2}} \Omega_V|_{\Psi(y)}(R_y(e),R_y(w_1),\cdots,R_y(w_{\frac{n-1}{2}})).
}
Hence
\eq{\label{eq:40}
    v_V = \Big( \frac{2}{1+\abs{y}^2} \Big)^{\frac{n-1}{2}} e\ip(R_y^*\Omega_V).
}
Using \eqref{eq:37.1} we have
\eq{\label{eq:41}
    R_y^*\Omega_V &= \Omega_V - \frac{2}{1+\abs{y}^2}(y-e)^\flat\w(y-e)\ip\Omega_V \\
    &= \Omega_V - \frac{2}{1+\abs{y}^2}(y-e)^\flat\w(y-e)\ip(e^\flat\w\omega_V + \sigma *_{\R^n}\omega_V) \\
    &= \Omega_V - \frac{2}{1+\abs{y}^2}(y-e)^\flat\w\big( -\omega_V - e^\flat\w(y-e)\ip\omega_V + \sigma (y-e)\ip*_{\R^n}\omega_V \big) \\
    &= \Omega_V - \frac{2}{1+\abs{y}^2}(y-e)^\flat\w\big( -\omega_V - e^\flat\w P_V(y)\ip\omega_V + \sigma Q_V(y)\ip*_{\R^n}\omega_V \big).
}
Moreover,
\eq{\label{eq:42}
    e\ip\Omega_V &= e\ip(e^\flat\w\omega_V + \sigma *_{\R^n}\omega_V) = \omega_V - e^\flat\w e\ip\omega_V = \omega_V,
}
and
\eq{\label{eq:43}
    &e\ip(y-e)^\flat\w\big( -\omega_V - e^\flat\w P_V(y)\ip\omega_V + \sigma Q_V(y)\ip*_{\R^n}\omega_V \big) \\
    =& \,\omega_V + e^\flat\w P_V(y)\ip\omega_V - \sigma Q_V(y)\ip*_{\R^n}\omega_V \\&+ (y-e)^\flat\w e\ip \big( \omega_V + e^\flat\w P_V(y)\ip\omega_V - \sigma Q_V(y)\ip*_{\R^n}\omega_V \big) \\
    =& \,\omega_V + e^\flat\w P_V(y)\ip\omega_V - \sigma Q_V(y)\ip*_{\R^n}\omega_V + (y-e)^\flat\w P_V(y)\ip\omega_V \\
    =&\, \omega_V - \sigma Q_V(y)\ip*_{\R^n}\omega_V + y^\flat\w P_V(y)\ip\omega_V,
}
where we used Proposition~\ref{basic_form} (3). Combining \eqref{eq:41}, \eqref{eq:42}, and \eqref{eq:43} gives
\eq{\label{eq:44}
    &e\ip R_y^*\Omega_V = \omega_V - \frac{2}{1+\abs{y}^2} \big( \omega_V - \sigma Q_V(y)\ip*_{\R^n}\omega_V + y^\flat\w P_V(y)\ip\omega_V \big) \\
    &= \Big( 1 - \frac{2}{1+\abs{y}^2} \Big)\omega_V + \frac{2}{1+\abs{y}^2}\sigma Q_V(y)\ip*_{\R^n}\omega_V - \frac{2}{1+\abs{y}^2}(P_V(y)+Q_V(y))^\flat\w P_V(y)\ip\omega_V \\
    &= \Big( 1 - \frac{2}{1+\abs{y}^2}(1+\abs{P_V(y)}^2) \Big)\omega_V + \frac{2}{1+\abs{y}^2}\sigma Q_V(y)\ip*_{\R^n}\omega_V - \frac{2}{1+\abs{y}^2}Q_V(y)^\flat\w P_V(y)\ip\omega_V \\
    &\eqcolon \beta_1 + \beta_2 + \beta_3.
}

We next compute $v_V\cdot v_V$. By continuity, it suffices to consider the case $P_V(y)\neq0$ and $Q_V(y)\neq0$. After choosing a suitable frame, we have
\eq{
    \omega_V = e^1\w\cdots\w e^{\frac{n-1}{2}} \qquad\hbox{with}\quad e_1 = \frac{P_V(y)}{\abs{P_V(y)}}, \quad e_{\frac{n+1}{2}} = \frac{Q_V(y)}{\abs{Q_V(y)}}.
}
For simplicity, we denote
\eq{\label{eq:44.1}
    \rho_1\coloneqq \frac{2}{1+\abs{y}^2}, \qquad \rho_2\coloneqq 1 - \frac{2}{1+\abs{y}^2}(1+\abs{P_V(y)}^2).
}
It follows that
\eq{
    \beta_1\cdot\beta_1 &= \rho_2^2P_V, \\
    \beta_2\cdot\beta_2 &= \rho_1^2 \big( \abs{Q_V(y)}^2Q_V - Q_V(y)\otimes Q_V(y) \big), \\
    \beta_3\cdot\beta_3 &= \rho_1^2 \Big\{ \abs{P_V(y)}^2 Q_V(y)\otimes Q_V(y) + \abs{Q_V(y)}^2 \big(\abs{P_V(y)}^2P_V - P_V(y)\otimes P_V(y)\big) \Big\},
}
and
\eq{
    &\beta_1\cdot\beta_2=\beta_2\cdot\beta_1=0, \qquad \beta_2\cdot\beta_3=\beta_3\cdot\beta_2=0, \\
    &\beta_1\cdot\beta_3=-\rho_1\rho_2P_V(y)\otimes Q_V(y), \qquad \beta_3\cdot\beta_1=-\rho_1\rho_2Q_V(y)\otimes P_V(y).
}
Inserting into \eqref{eq:44} gives
\eq{\label{eq:45}
    S(y) &\coloneqq (e\ip R_y^*\Omega_V)\cdot(e\ip R_y^*\Omega_V) \\
    &= \big(\rho_2^2 + \rho_1^2\abs{P_V(y)}^2\abs{Q_V(y)}^2\big)P_V + \rho_1^2\abs{Q_V(y)}^2Q_V \\
    &\quad + \rho_1^2\big(\abs{P_V(y)}^2-1\big)Q_V(y)\otimes Q_V(y) - \rho_1^2\abs{Q_V(y)}^2P_V(y)\otimes P_V(y) \\
    &\quad -\rho_1\rho_2\big(P_V(y)\otimes Q_V(y)+Q_V(y)\otimes P_V(y)\big).
}
Note that
\eq{
    W(a\otimes b, c\otimes d) = W_{ikjl}(a\otimes b)^{ij}(c\otimes d)^{kl} = W_{ikjl}a^ib^jc^kd^l = W(a,c,b,d).
}
By the trace-free property of the Weyl tensor, we have
\eq{
    W(Q_V,y\otimes y) &= -W(P_V,y\otimes y), \\
    W(P_V(y)\otimes P_V(y),y\otimes y) &= W(P_V(y)\otimes P_V(y),Q_V(y)\otimes Q_V(y)), \\
    W(Q_V(y)\otimes Q_V(y),y\otimes y) &= W(P_V(y)\otimes P_V(y),Q_V(y)\otimes Q_V(y)), \\
    W(P_V(y)\otimes Q_V(y)+Q_V(y)\otimes P_V(y),y\otimes y) &= -2W(P_V(y)\otimes P_V(y),Q_V(y)\otimes Q_V(y)).
}
Combining with \eqref{eq:45} gives
\eq{\label{eq:46}
    W(S(y),y\otimes y) &= \Big( \rho_2^2 + \rho_1^2\abs{Q_V(y)}^2(\abs{P_V(y)}^2-1) \Big)W(P_V,y\otimes y) \\
    &\quad + \Big( \rho_1^2(\abs{P_V(y)}^2-\abs{Q_V(y)}^2-1) + 2\rho_1\rho_2 \Big)W(P_V(y)\otimes P_V(y),Q_V(y)\otimes Q_V(y)).
}
Using \eqref{eq:44.1} and $\abs{y}^2=\abs{P_V(y)}^2+\abs{Q_V(y)}^2$, we have
\eq{
    \rho_2^2 + \rho_1^2\abs{Q_V(y)}^2(\abs{P_V(y)}^2-1) = 1 - 2\rho_1^2\abs{Q_V(y)}^2,
}
and
\eq{
    \rho_1^2(\abs{P_V(y)}^2-\abs{Q_V(y)}^2-1) + 2\rho_1\rho_2 = -2\rho_1^2.
}
Inserting into \eqref{eq:46} and using \eqref{eq:40} gives
\eq{\label{eq:47}
    &\Big( \frac{1+\abs{y}^2}{2} \Big)^{n-1}W(v_V\cdot v_V,y\otimes y) = W(S(y),y\otimes y) \\
    &= \big(1 - 2\rho_1^2\abs{Q_V(y)}^2\big)W(P_V,y\otimes y) - 2\rho_1^2W(P_V(y)\otimes P_V(y),Q_V(y)\otimes Q_V(y)).
}

Finally, we compute $c_W(v_V)$. By definition \eqref{eq:27.1} we have
\eq{\label{eq:48}
    c_W(v_V) &= W_{ikjl}(x_0)\int_{\R^n}\frac{2}{1+\abs{y}^2}(v_V\cdot v_V)^{ij}y^ky^l \rd y = \int_{\R^n}\Big(\frac{2}{1+\abs{y}^2}\Big)^n W(S(y),y\otimes y) \rd y \\
    &= \int_0^\infty \Big(\frac{2}{1+r^2}\Big)^n \Big( \int_{\S^{n-1}} W(S(r\theta),r\theta\otimes r\theta) \rdV_{\S^{n-1}}(\theta) \Big) r^{n-1} \rd r.
}
Using
\eq{
    \fint_{\S^{n-1}} \theta_i\theta_j\,\rdV_{\S^{n-1}} = \frac{1}{n}\delta_{ij}, \qquad \fint_{\S^{n-1}} \theta_i\theta_j\theta_k\theta_l \,\rdV_{\S^{n-1}} = \frac{\delta_{ij}\delta_{kl}+\delta_{ik}\delta_{jl}+\delta_{il}\delta_{jk}}{n(n+2)},
}
we see that
\eq{\label{eq:49}
    \fint_{\S^{n-1}} W(P_V,r\theta\otimes r\theta) = W_{ikjl}(x_0)(P_V)^{ij}r^2\fint_{\S^{n-1}} \theta^k\theta^l = 0,   
}
and
\eq{\label{eq:50}
    \fint_{\S^{n-1}} \abs{Q_V(r\theta)}^2W(P_V,r\theta\otimes r\theta) &= W_{ikjl}(x_0)(P_V)^{ij}(Q_V)_{ab}r^4\fint_{\S^{n-1}}\theta^a\theta^b\theta^k\theta^l \\
    &= \frac{2r^4}{n(n+2)}W(P_V,Q_V) = -\frac{2r^4}{n(n+2)}W(P_V,P_V).
}
Moreover,
\eq{\label{eq:51}
    &\fint_{\S^{n-1}} W(P_V(r\theta)\otimes P_V(r\theta),Q_V(r\theta)\otimes Q_V(r\theta)) \\
    &= W_{ikjl}(x_0)(P_V)^i_a(P_V)^j_b(Q_V)^k_c(Q_V)^l_d r^4 \fint_{\S^{n-1}} \theta^a\theta^b\theta^c\theta^d \\
    &= \frac{r^4}{n(n+2)}W_{ikjl}(x_0) \big( (P_V)^{ij}(Q_V)^{kl} + (P_VQ_V)^{ik}(P_VQ_V)^{jl} + (P_VQ_V)^{il}(P_VQ_V)^{jk} \big) \\
    &= \frac{r^4}{n(n+2)}W_{ikjl}(x_0)(P_V)^{ij}(Q_V)^{kl} = \frac{r^4}{n(n+2)}W(P_V,Q_V) = -\frac{r^4}{n(n+2)}W(P_V,P_V).
}
Combining \eqref{eq:47}, \eqref{eq:49}, \eqref{eq:50}, and \eqref{eq:51} gives
\eq{
    \fint_{\S^{n-1}} W(S(r\theta),r\theta\otimes r\theta) = \frac{6\rho_1^2r^4}{n(n+2)}W(P_V,P_V).
}
Inserting into \eqref{eq:48} yields
\eq{
    c_W(v_V) &= \frac{6\omega_{n-1}}{n(n+2)}W(P_V,P_V)\int_0^\infty \Big(\frac{2}{1+r^2}\Big)^{n+2}r^{n+3} \rd r = \frac{6\omega_n}{n(n+1)}W(P_V,P_V),
}
where the last step follows by a computation of definite integral. Hence we complete the proof.
\end{proof}

\begin{lemma}\label{lem5.5}
If $W(x_0)\neq0$, then there exists an oriented $\frac{n-1}{2}$-dimensional subspace $V\subset T_{x_0}M\cong\R^n$ such that $W(P_V,P_V)>0$.
\end{lemma}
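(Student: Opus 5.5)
We plan to argue by contradiction, reducing the claim to the fact that an algebraic curvature tensor is determined by its sectional curvatures. Set $m\coloneqq\frac{n-1}{2}$; since $n\equiv3\pmod4$ and $n>3$, we have $n\ge7$, so $m\ge3$ and $n-m\ge4$. For an oriented $m$-plane $V$ with orthonormal basis $u_1,\dots,u_m$, we have $P_V=\sum_a u_a\otimes u_a$. Hence
\eq{
    W(P_V,P_V)=\sum_{a,b=1}^m W_{ikjl}u_a^iu_a^ju_b^ku_b^l=\sum_{a\neq b}W(u_a,u_b,u_a,u_b),
}
which is twice the sum of the ``Weyl sectional curvatures'' $K_W(u_a,u_b)\coloneqq W(u_a,u_b,u_a,u_b)$ over pairs in the basis. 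The orientation of $V$ plays no role.

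Suppose, for contradiction, that $W(P_V,P_V)\le0$ for every $V$. The first step is to upgrade this to $W(P_V,P_V)=0$ for all $V$. For this we average over the Grassmannian, or more concretely over coordinate $m$-planes of a fixed orthonormal basis $e_1,\dots,e_n$. Write $k_{ab}\coloneqq K_W(e_a,e_b)$ for $a\neq b$. Summing $\sum_{a\neq b\in S}k_{ab}$ over all $m$-subsets $S\subset\{1,\dots,n\}$ gives a positive multiple of $\sum_{a\neq b}k_{ab}$. This vanishes, because trace-freeness gives $\sum_b k_{ab}=\sum_bW_{abab}=0$ for each $a$. Since every summand is $\le0$, each one is zero. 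Applying this in every orthonormal basis gives $W(P_V,P_V)=0$ for all $V$.

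The second step, which I expect to be the main combinatorial point, is to show that all $k_{ab}$ vanish. Fix the basis, and let $F(S)\coloneqq\sum_{a\neq b\in S}k_{ab}$, so that $F(S)=0$ for every $m$-subset $S$. Take distinct $a,b$ and an $(m-1)$-subset $T$ avoiding $a,b$. Subtracting $F(T\cup\{b\})=0$ from $F(T\cup\{a\})=0$ gives
\eq{
    \sum_{s\in T}(k_{as}-k_{bs})=0 .
}
Now swap one element $s\in T$ for some $s'\notin T\cup\{a,b\}$; there is room since $n-2-(m-1)=m+1\ge 1$. This yields $k_{as}-k_{bs}=k_{as'}-k_{bs'}$, so $k_{as}-k_{bs}=c_{ab}$ is independent of $s\notin\{a,b\}$. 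The displayed relation then reads $(m-1)c_{ab}=0$, hence $c_{ab}=0$, that is, $k_{as}=k_{bs}$ for all distinct $a,b,s$. Chaining these equalities together with the symmetry $k_{ab}=k_{ba}$ shows that all $k_{ab}$ equal a single constant $\kappa$. Trace-freeness then gives $(n-1)\kappa=0$, so $\kappa=0$.

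Since the basis was arbitrary, $K_W(u,v)=0$ for every orthonormal pair $u,v$. Finally, $W$ has the symmetries of an algebraic curvature tensor (including the first Bianchi identity), and such a tensor is determined by its sectional curvatures through the standard polarization formula. Therefore $W(x_0)=0$, contradicting the hypothesis, and so some $V$ satisfies $W(P_V,P_V)>0$.
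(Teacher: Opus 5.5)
Your proof is correct, but it takes a different route from the paper's.

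\textbf{The paper's route.} The paper argues constructively. It first uses that $W$ is determined by its sectional curvatures, together with the row-sum identity $\sum_j W_{1j1j}(x_0)=0$, to find an orthonormal pair with $K_{12}=W_{1212}(x_0)>0$. It then extends $V_k=\mathrm{span}\{e_1,\dots,e_k\}$ greedily, taking $e_{k+1}\in V_k^\perp$ to be a top eigenvector of the symmetric form $A_k(X,Y)=\sum_{a\le k}W(e_a,X,e_a,Y)$. Trace-freeness gives $\mathrm{tr}\,A_k=-2S_k$, hence $S_{k+1}\ge(1-\tfrac{2}{n-k})S_k>0$ all the way up to $k=\tfrac{n-1}{2}$.

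\textbf{Your route.} You argue by contradiction. Averaging over the coordinate $m$-planes of a fixed orthonormal basis kills the total by trace-freeness, so non-positivity forces every coordinate-plane sum to vanish. Your exchange argument then shows that all $W_{abab}(x_0)$ in that basis coincide, hence vanish. The rigidity of algebraic curvature tensors under vanishing sectional curvature enters only at the very end, whereas the paper needs it at the start.

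\textbf{What each approach buys.} The paper's construction produces an explicit $V$ containing a prescribed plane of positive Weyl sectional curvature. It also gives a quantitative bound, $\tfrac12 W(P_V,P_V)\ge \prod_{k=2}^{m-1}\bigl(1-\tfrac{2}{n-k}\bigr)K_{12}$. Your argument avoids the spectral step entirely. Read per basis, it proves a slightly stronger statement: in \emph{any} orthonormal basis in which some $W_{abab}(x_0)\neq 0$, one of the coordinate $m$-planes of that basis already satisfies $W(P_V,P_V)>0$.

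\textbf{A harmless slip.} You wrote $n-2-(m-1)=m+1$, but with $n=2m+1$ this equals $m$. All you need there is that it is at least $1$, so nothing changes.
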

\begin{proof}
Denote
\eq{
    K_{ij} \coloneqq W_{ijij}(x_0).
}
Since the Weyl tensor is determined by its values on $2$-planes, we may choose coordinates, such that there exists some $K_{12}\neq0$. Since
\eq{
    \sum_jK_{1j} = \sum_jW_{1j1j}(x_0) = 0,
}
without loss of generality we may assume $K_{12}>0$.

Next, we use an induction argument. Assume for $2\leq k< \frac{n-1}{2}$ that there exists an oriented $k$-dimensional subspace $V_k={\rm span}\{e_1,\cdots,e_k\}$ such that
\eq{
    S_k \coloneqq \sum_{1\leq i<j\leq k} K_{ij} > 0.
}
Set
\eq{
    A_k(X,Y) \coloneqq \sum_{a=1}^k W(e_a,X,e_a,Y), \qquad X,Y\in V_k^\perp.
}
Then $A_k$ is a symmetric $2$-tensor on $V_k^\perp$, hence has real eigenvalues. Choose $e_{k+1}\in V_k^\perp$ to be the unit eigenvector with respect to the maximal eigenvalue $\lambda_{{\rm max}}$ of $A_k$. By the trace-free property of the Weyl tensor, we have
\eq{
    {\rm tr}(A_k) = \sum_{b>k}\sum_{a=1}^k K_{ab} = \sum_{a=1}^k\sum_{b>k}K_{ab} = -\sum_{a=1}^k\sum_{b\leq k} K_{ab} = -2S_k.
}
Hence
\eq{
    \lambda_{{\rm max}} \geq \frac{{\rm tr}(A_k)}{n-k} = \frac{-2S_k}{n-k}.
}
By our choice of $e_{k+1}$, we see that
\eq{
    S_{k+1} = S_k + \sum_{a=1}^k K_{a,k+1} = S_k + A_k(e_{k+1},e_{k+1}) = S_k + \lambda_{{\rm  max}} \geq \Big( 1 - \frac{2}{n-k} \Big)S_k > 0, 
}
where we used $k<\frac{n-1}{2}$ and hence $\frac{2}{n-k}<1$. Let $V\coloneqq V_{\frac{n-1}{2}}$. Induction gives
\eq{
    0 < S_{\frac{n-1}{2}} = \sum_{1\leq i<j\leq \frac{n-1}{2}} W_{ijij} = \frac{1}{2}W(P_V,P_V).
}
Hence we complete the proof.
\end{proof}

We now prove Theorem~\ref{thm:main_3}.

\begin{proof}[Proof of Theorem~\ref{thm:main_3}]
It is a consequence of Lemma~\ref{lem5.3}, Lemma~\ref{lem5.4}, and Lemma~\ref{lem5.5}.
\end{proof}

\appendix

\section{Regularity theorem for the curl--Yamabe equation}\label{appendix_regularity}

In this Appendix, we give a complete proof of the regularity in Theorem~\ref{thm:main_2}. The argument is very similar to \cite[Appendix C]{WZ26curl}, where we prove the same result on $\S^n$.

Note that the Euler--Lagrange equation of \eqref{eq:inf} is
\eq{\label{eq:true_E-L}
    \curl\big(\abs{\curl \eta}^{-\frac{2}{n+1}}\curl \eta\big) = \mu\,\curl \eta
}
for some constant $\mu>0$. Set
\eq{\label{eq:A1}
    \alpha \coloneqq \Big( \frac{2\mu}{n+1} \Big)^{\frac{n-1}{2}} \abs{\curl \eta}^{-\frac{2}{n+1}}\curl \eta,
}
then \eqref{eq:true_E-L} becomes
\eq{\label{eq:A2}
    \curl\alpha = \frac{n+1}{2}\abs{\alpha}^{\frac{2}{n-1}}\alpha,
}
which is exactly \eqref{eq:Yamabe}.

In Section~\ref{sec4} we have already proved that there exists a minimizer $\eta\in W^{1,\frac{2n}{n+1}}(M)$, which must solve \eqref{eq:true_E-L}. Therefore, by \eqref{eq:A1} and \eqref{eq:A2} we see that $\alpha\in L^{\frac{2n}{n-1}}$, which implies $\rd\alpha\in L^{\frac{2n}{n+1}}$. Moreover, one easily checks that $J(\alpha)=J(\eta)$, hence $\alpha$ is also a minimizer.

\begin{theorem}
Let $\alpha\in L^{\frac{2n}{n-1}}(M)$ be an $\frac{n-1}{2}$-form that solves \eqref{eq:A2}. Then
\eq{
    \alpha\in C^{0,\tau}(M) \cap W^{1,2}(M)\cap C^\infty(\{\alpha\neq0\})
}
for some $\tau>0$.
\end{theorem}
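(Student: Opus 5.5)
Since $\curl\alpha=\frac{n+1}{2}\abs{\alpha}^{\frac{2}{n-1}}\alpha$ and $\curl\alpha=*\rd\alpha$ with $\rd^*$ anticommuting appropriately, we first record that $\rd^*\alpha = \pm *\rd*\alpha$ is determined by the equation: applying $\rd^*$ to $\curl\alpha=*\rd\alpha$ gives $\rd^*(*\rd\alpha)=\pm*\rd\rd\alpha=0$. Hence the right-hand side $f(\alpha)\coloneqq\frac{n+1}{2}\abs{\alpha}^{\frac{2}{n-1}}\alpha$ is co-closed. Writing $\alpha=\alpha_{\rm ex}+\alpha'$ with $\alpha_{\rm ex}$ exact and $\alpha'$ co-closed (Hodge decomposition), the equation forces $\alpha=\frac{2}{n+1}\curl^{-1}$-type relations. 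Concretely, we use that $\alpha = \frac{2}{n+1}\abs{\alpha}^{-\frac{2}{n-1}}\curl\alpha$ wherever $\alpha\ne0$, so $\alpha$ is itself co-closed: $\rd^*\alpha=0$ follows since $\alpha$ is a (nonlinear) function of the co-closed form $\curl\alpha$ only in a weak sense. Rather than rely on this, we work with the first-order elliptic system $(\rd+\rd^*)\alpha = \pm*f(\alpha)+\rd^*\alpha$ and treat the gauge by noting $\rd\alpha=\pm*f(\alpha)$, $\rd^*f(\alpha)=0$. The plan is to show that $\alpha$ can be taken co-closed: project $\alpha$ onto co-exact plus harmonic part $\tilde\alpha$ (so $\rd\tilde\alpha=\rd\alpha$), and note $\abs{\alpha}$ enters the nonlinearity, so instead we apply the argument of \cite[Appendix C]{WZ26curl}: set $\zeta\coloneqq f(\alpha)=\curl\alpha$, which is co-closed and lies in $L^{\frac{2n}{n+1}}$, and exploit that $\zeta$ and $\alpha$ determine each other pointwise via $\alpha=c\abs{\zeta}^{-\frac{2}{n+1}}\zeta$.

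\textbf{Step 1 (improved integrability).} We run a Brezis--Kato / Moser type bootstrap on the system $\rd\alpha=*\,V\alpha$, $\rd^*(V\alpha)=0$ with potential $V=\frac{n+1}{2}\abs{\alpha}^{\frac{2}{n-1}}\in L^{n}$. Split $V=V_1+V_2$ with $\norm{V_1}_{L^n}$ small and $V_2\in L^\infty$. Let $\beta$ be the co-closed, harmonic-free representative with $\rd\beta=\rd\alpha$; Gaffney \eqref{eq:Gaffney_2} gives $\norm{\beta}_{W^{1,q}}\lesssim\norm{V\alpha}_{L^q}$. The difficulty is that $\alpha-\beta$ is closed and not controlled by the equation; we handle it by using that $V\alpha$ is co-closed, i.e. $\rd^*(V\alpha)=0$, which gives a second equation for the closed part $\alpha-\beta=\rd\phi+h$: namely $\rd^*(V\rd\phi)=-\rd^*(V(\beta+h))$, a degenerate elliptic equation. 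The cleaner route, following the sphere case, is to work with $\zeta$: $\rd^*\zeta=0$ and $\rd(\abs{\zeta}^{-\frac{2}{n+1}}\zeta)=c*\zeta$, which is a $p$-Laplacian-type (with $p=\frac{2n}{n+1}<2$) first-order system for the potential; test with $\abs{\zeta}^{s}$-weighted truncations to push $\zeta$ from $L^{\frac{2n}{n+1}}$ to every $L^q$, using smallness of $\norm{V_1}_{L^n}$ to absorb the critical term. This is the main obstacle: the half-ellipticity means the energy estimate must be performed on the co-closed variable, and the test forms must respect the co-closed constraint (use Hodge projection of $\abs{\zeta}^s\zeta$ and control the commutator).

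\textbf{Step 2 ($W^{1,2}$ and H\"older).} Once $\alpha\in L^q$ for all $q<\infty$, the equation $\curl\alpha\in L^q$ together with the co-closed structure gives, via \eqref{eq:Gaffney} and Sobolev embedding, $\alpha\in W^{1,q}$ for the relevant representative, whence $W^{1,2}$ and $C^{0,\tau}$ by Morrey for $q>n$; the nonlinearity $t\mapsto\abs{t}^{\frac{2}{n-1}}t$ is only H\"older at $0$, limiting global regularity to $C^{0,\tau}$. \textbf{Step 3 (smoothness off the zero set).} On the open set $\{\alpha\ne0\}$ the nonlinearity is smooth, $\rd^*\alpha=0$ there, so $(\rd+\rd^*)\alpha=*f(\alpha)$ is a smooth elliptic first-order system; standard Schauder bootstrapping gives $C^\infty$.
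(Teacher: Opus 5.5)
There is a genuine gap: your argument never controls the exact part of $\alpha$, and that is the whole difficulty. The equation $\curl\alpha=\frac{n+1}{2}\abs{\alpha}^{\frac{2}{n-1}}\alpha$ is not gauge invariant, so in Step~2 there is no ``relevant representative'' to pick. Write $\alpha=\beta+\chi+\rd\phi$ (co-exact, harmonic, exact). Then $\rd\alpha\in L^q$ controls only $\beta=\rd^*G(\rd\alpha)$. Gaffney's inequality for $\alpha$ itself also needs $\rd^*\alpha$, and this is not zero: expanding $\rd^*(\abs{\alpha}^{\frac{2}{n-1}}\alpha)=0$ gives $\abs{\alpha}^{\frac{2}{n-1}}\rd^*\alpha=\nabla\big(\abs{\alpha}^{\frac{2}{n-1}}\big)\ip\alpha$, a term of the same order as $\nabla\alpha$.

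For the same reason, your claim in Step~3 that $\rd^*\alpha=0$ on $\{\alpha\neq0\}$ is false. There you must use the quasilinear system $\rd\alpha=\frac{n+1}{2}\abs{\alpha}^{\frac{2}{n-1}}*\alpha$, $\rd^*(\abs{\alpha}^{\frac{2}{n-1}}\alpha)=0$, which is elliptic where $\alpha\neq0$.

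The exact part is governed only by the degenerate $p$-Laplace-type equation $\rd^*\big(\abs{\beta+\chi+\rd\phi}^{p-2}(\beta+\chi+\rd\phi)\big)=0$ with $p=\frac{2n}{n-1}$. So knowing $\alpha\in L^q$ for all $q$ yields no $W^{1,q}$ bound on $\rd\phi$, and Morrey's embedding is not available. Step~1 does not fill this either. The Brezis--Kato splitting $V=V_1+V_2$ works for elliptic operators such as the Dirac operator, but $\curl$ annihilates all closed forms. Your substitute, Moser iteration for $\zeta=\curl\alpha$ with test forms $\abs{\zeta}^s\zeta$, requires co-exact test forms. The ``Hodge projection and commutator control'' you invoke is exactly the unproved core, as you note yourself.

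The paper supplies the missing mechanism by degenerate-elliptic theory for the exact part.

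For $C^{0,\tau}$, it splits $\alpha=\beta_r+\rd\phi_r$ on small balls using the averaged Poincar\'e homotopy operator. This makes $\norm{\beta_r}_{L^{2n/(n-1)}}\lesssim\frac{n+1}{2}\norm{\alpha}_{L^{2n/(n-1)}(B_r)}^{(n+1)/(n-1)}$ superlinearly small. It then compares $\rd\phi_r$ with its $p$-harmonic replacement, using monotonicity of $t\mapsto\abs{t}^{p-2}t$, and imports Hamburger's sup and decay estimates for $p$-harmonic forms. This yields Morrey decay of $\int_{B_r}\abs{\alpha}^{\frac{2n}{n-1}}$, then Campanato decay.

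For $W^{1,2}$, it minimizes a regularized convex functional over the affine class $\beta+\chi+\rd\Omega^{\frac{n-3}{2}}$. From the regularized Euler--Lagrange equation it gets the pointwise bound $\abs{\rd^*\rd\phi_\epsilon}\le\frac{2}{n-1}\abs{\nabla\alpha_\epsilon}$. It absorbs this with the $L^2$ Weitzenb\"ock inequality $\norm{\nabla\rd\phi_\epsilon}_2^2\le\norm{\rd^*\rd\phi_\epsilon}_2^2+C\norm{\rd\phi_\epsilon}_2^2$, whose leading constant is exactly $1$. That is why only $W^{1,2}$, not $W^{1,q}$, is obtained, and why $n=3$ needs the sharper algebraic constant $\frac{1}{\sqrt3}$. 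Finally, uniqueness of the minimizer of the limiting convex functional identifies the weak limit with $\alpha$.
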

\begin{proof} The proof is the same as that in \cite[Appendix C]{WZ26curl}. For convenience of the reader, we repeat it here.

\smallskip
\noindent\emph{Step 1: H\"older continuity.}
Fix a small ball $B_r\subset M$. The averaged Poincar\'e homotopy estimate (see \cite[Proposition 4.1]{IL93}) implies that there exists a bounded linear operator $T_r$ such that
\eq{\label{eq:A4}
    \alpha = T_r(\rd\alpha) + \rd(T_r\alpha) \eqcolon \beta_r + \rd\phi_r,
}
and
\eq{
    \norm{\beta_r}_{L^{\frac{2n}{n-1}}(B_r)} \lesssim \norm{\rd\alpha}_{L^{\frac{2n}{n+1}}(B_r)}
    = \frac{n+1}{2}\norm{\alpha}_{L^{\frac{2n}{n-1}}(B_r)}^{\frac{n+1}{n-1}}.
}
Let $h=\rd\psi$ be the $p$-harmonic replacement of $\rd\phi_r$ in $B_r$, with $p=\frac{2n}{n-1}$; namely, $\psi$ minimizes $\int_{B_r}\abs{\rd\psi}^{p}$ among competitors with $\psi-\phi_r\in W^{1,p}_0(B_r)$. Then
\eq{
    \rd h = 0, \qquad \rd^*\big( \abs{h}^{\frac{2}{n-1}}h \big) = 0,
}
and the interior estimate yields (see e.g. \cite[Theorem~4.1]{Hamburger92})
\eq{\label{C2}
    \sup_{B_{r/2}} \abs{h}^{\frac{2n}{n-1}} \lesssim r^{-n}\int_{B_r}\abs{h}^{\frac{2n}{n-1}}.
}
Testing 
\eq{
    \rd^*\big( \abs{\alpha}^{\frac{2}{n-1}}\alpha \big) = 0, \qquad
    \rd^*\big( \abs{h}^{\frac{2}{n-1}}h \big) = 0
}
against $\phi_r-\psi$ and subtracting, we obtain
\eq{
    \int_{B_r} \<\abs{\alpha}^{\frac{2}{n-1}}\alpha - \abs{h}^{\frac{2}{n-1}}h, \alpha-h\>
    = \int_{B_r}\<\abs{\alpha}^{\frac{2}{n-1}}\alpha - \abs{h}^{\frac{2}{n-1}}h, \beta_r\>.
}
Using the standard monotonicity and growth bounds
\eq{
    \<\abs{\alpha}^{\frac{2}{n-1}}\alpha - \abs{h}^{\frac{2}{n-1}}h, \alpha-h\> \geq c_n\abs{\alpha-h}^{\frac{2n}{n-1}},
}
\eq{
    \Abs{\abs{\alpha}^{\frac{2}{n-1}}\alpha - \abs{h}^{\frac{2}{n-1}}h}
    \leq C_n\big( \abs{\alpha}^{\frac{n+1}{n-1}} + \abs{h}^{\frac{n+1}{n-1}} \big),
} we obtain, by
 H\"older's inequality, 
\eq{\label{C3}
    \int_{B_r}\abs{\alpha-h}^{\frac{2n}{n-1}}
    \lesssim \Big( \norm{\alpha}_{\frac{2n}{n-1}}^{\frac{n+1}{n-1}} + \norm{h}_{\frac{2n}{n-1}}^{\frac{n+1}{n-1}} \Big)
    \norm{\beta_r}_{\frac{2n}{n-1}}.
}
For $r$ sufficiently small we may assume $\int_{B_r}\abs{\alpha}^{\frac{2n}{n-1}}\leq \epsilon$ (with $\epsilon$ to be fixed).  By minimality of $h$ we  have $\norm{h}_{\frac{2n}{n-1}}\lesssim \norm{\alpha}_{\frac{2n}{n-1}}$ on $B_r$, and hence
\eq{
    \int_{B_r}\abs{\alpha-h}^{\frac{2n}{n-1}} \lesssim \Big( \int_{B_r}\abs{\alpha}^{\frac{2n}{n-1}} \Big)^{\frac{n+1}{n}} \leq \epsilon\Big(\int_{B_r}\abs{\alpha}^{\frac{2n}{n-1}} \Big)^{\frac{1}{n}}.
}
Define
\eq{
    \Psi_x(s) \coloneqq \int_{B_s(x)}\abs{\alpha}^{\frac{2n}{n-1}}.
}
Using \eqref{C2},  for $\theta\in\big(0,\frac12\big)$ we get 
\eq{
    \int_{B_{\theta r}(x)}\abs{h}^{\frac{2n}{n-1}} \lesssim \theta^n \Psi_x(r).
}
Combining the last two bounds yields
\eq{
    \Psi_x(\theta r) \leq C\big( \theta^n + \Psi_x(r)^{\frac{1}{n}} \big)\Psi_x(r).
}
Fix $\mu<n$. Choose $\theta\in\big(0,\frac12\big)$ such that $C\theta^n\leq\frac12\theta^\mu$, and then choose $\epsilon>0$ such that $C\epsilon^{1/n}\leq\frac12\theta^\mu$. Then
\eq{\label{C4}
    \Psi_x(\theta r) \leq \theta^\mu\Psi_x(r).
}
Iterating \eqref{C4} yields
\eq{
    \int_{B_r(x)}\abs{\alpha}^{\frac{2n}{n-1}} \leq C(\mu,\alpha)\, r^\mu, \qquad x\in M.
}
We now apply the homogeneous interior H\"older estimate to the replacement $h=h_r$, see e.g. \cite{Hamburger92}, which implies that for some $\sigma>0$,
\eq{
    \inf_c \int_{B_s(x)}\abs{h_r-c}^{\frac{2n}{n-1}} \lesssim (s/r)^{n+\frac{2n}{n-1}\sigma} \int_{B_r(x)}\abs{h_r}^{\frac{2n}{n-1}}
}
for any $s\in(0,r/2]$. Here $c$ is a constant $\frac{n-1}{2}$-form. The previous comparison estimate and Morrey decay give that
\eq{\label{C4.1}
     \inf_c \int_{B_s(x)}\abs{\alpha-c}^{\frac{2n}{n-1}} \lesssim r^{\frac{n+1}{n}\mu} + (s/r)^{n+\frac{2n}{n-1}\sigma} r^\mu.
}
For sufficiently small $s$, we may assume $r=s^a$ for $a\in(\frac{n}{n+1},1)$. Then the right-hand side of \eqref{C4.1} becomes
\eq{
    s^{\frac{n+1}{n}a\mu} + s^{(1-a)(n+\frac{2n}{n-1}\sigma)+a\mu}.
}
Since $a\in(\frac{n}{n+1},1)$, we see that
\eq{
    \lim_{\mu\to n}\frac{n+1}{n}a\mu = (n+1)a > n,
}
and
\eq{
    \lim_{\mu\to n}(1-a)(n+\frac{2n}{n-1}\sigma)+a\mu = n+\frac{2n}{n-1}(1-a)\sigma>n.
}
Hence choosing $\mu$ sufficiently close to $n$ gives
\eq{
    \inf_c \int_{B_s(x)}\abs{\alpha-c}^{\frac{2n}{n-1}} \lesssim s^{n+\frac{2n}{n-1}\tau}
}
for some $\tau>0$. Campanato's theorem \cite[Page 183]{Campanato63} then implies $\alpha\in C^{0,\tau}$ for some $\tau>0$.

\smallskip
\noindent\emph{Step 2: $W^{1,2}$ estimate.}
Since $\alpha\in C^{0,\tau}$, the right-hand side of \eqref{eq:A2} is H\"older continuous, and hence so is $\rd\alpha$. Set $\beta\coloneqq \rd^*G(\rd\alpha)$, where $G$ is the Hodge Green operator. It satisfies the elliptic system
\eq{
    \rd\beta = \rd\alpha\in C^{0,\tau}, \qquad \rd^*\beta=0,
}
hence standard Schauder theory implies $\beta\in C^{1,\tau}\subset W^{1,2}$. The Hodge decomposition gives
\eq{
    \alpha = \beta + \chi + \rd\phi, \quad \beta+\chi\in C^{1,\tau}\subset W^{1,2},
}
where $\chi$ is the harmonic component.

To estimate $\rd\phi$ we use a regularization argument. Define
\eq{
    J_\epsilon(\psi) \coloneqq \frac{n-1}{2n}\int_{M}\big( \epsilon^2 + \abs{\beta+\chi+\rd\psi}^2 \big)^{\frac{n}{n-1}},
}
and let $\phi_\epsilon$ be a minimizer subject to $\rd^*\phi_\epsilon=0$ and $\phi_\epsilon\perp$ \{harmonic forms\} (when $n=3$, subject to $\int_{M}\phi_\epsilon=0$). The existence of $\phi_\epsilon$ follows from the direct method. Set $\alpha_\epsilon\coloneqq \beta+\chi+\rd\phi_\epsilon$. The Euler--Lagrange equation is 
\eq{\label{C5}
    \rd^*\Big( (\epsilon^2+\abs{\alpha_\epsilon}^2)^{\frac{1}{n-1}}\alpha_\epsilon \Big) = 0.
}
Since $\rd^*(\beta+\chi)=0$, expanding \eqref{C5} yields
\eq{\label{C6}
    \abs{\rd^*\rd\phi_\epsilon}
    \leq \frac{2}{n-1}\abs{\nabla\alpha_\epsilon}
    \leq \frac{{2}}{n-1}\big(\abs{\nabla\rd\phi_\epsilon}+\abs{\nabla(\beta+\chi)}\big).
}
The Weitzenb\"ock formula gives
\eq{\label{C7}
    \norm{\nabla\rd\phi_\epsilon}_2^2 \leq \norm{\rd^*\rd\phi_\epsilon}_2^2 + C(M,g)\norm{\rd\phi_\epsilon}_2^2.
}
Combining \eqref{C6} and \eqref{C7} we obtain
\eq{\label{eq:A5}
    \norm{\nabla\rd\phi_\epsilon}_2
    \leq \frac{2}{n-1}\norm{\nabla\rd\phi_\epsilon}_2 + \frac{2}{n-1}\norm{\nabla(\beta+\chi)}_2 + C(M,g)\norm{\rd\phi_\epsilon}_2.
}
If $n\geq 5$ this implies
\eq{\label{eq:A6}
    \norm{\nabla\rd\phi_\epsilon}_2 \leq \frac{2}{n-3}\norm{\nabla(\beta+\chi)}_2 + C(M,g)\norm{\rd\phi_\epsilon}_2.
}
Since $J_\epsilon(\phi_\epsilon)\leq J_\epsilon(\phi)$, we see that
\eq{
    \sup_{0<\epsilon\leq1}\norm{\alpha_\epsilon}_2 \lesssim \sup_{0<\epsilon\leq1}\norm{\alpha_\epsilon}_{\frac{2n}{n-1}} < \infty.
}
Hence Minkowski's inequality gives
\eq{
    \sup_{0<\epsilon\leq1}\norm{\rd\phi_\epsilon}_2 \leq \sup_{0<\epsilon\leq1}\norm{\alpha_\epsilon}_2 + \norm{\beta+\chi}_2 < \infty.
}
Therefore, the right-hand side of \eqref{eq:A6} is uniformly bounded. In order to see that $\rd\phi_\epsilon$ converges to $\rd\phi$, we set
\eq{
    F_\epsilon(u) \coloneqq \frac{n-1}{2n}\int_M \big( \epsilon^2 + \abs{u}^2 \big)^{\frac{n}{n-1}}, \qquad u\in\Omega^{\frac{n-1}{2}}(M).
}
By \eqref{eq:A2} we have $\rd^*(\abs{\alpha}^{\frac{2}{n-1}}\alpha)=0$. Hence for any $u\in\beta+\chi+\rd\Omega^{\frac{n-3}{2}}$, we have
\eq{
    u-\alpha=\rd\psi
}
for some $\psi\in\Omega^{\frac{n-3}{2}}$. It follows that
\eq{
    \int_M \< \abs{\alpha}^{\frac{2}{n-1}}\alpha,u-\alpha\> = \int_M \< \abs{\alpha}^{\frac{2}{n-1}}\alpha,\rd\psi\> = 0.
}
Since $F_0$ is convex, we have
\eq{
    F_0(u) \geq F_0(\alpha) + \int_M \< \abs{\alpha}^{\frac{2}{n-1}}\alpha,u-\alpha\> = F_0(\alpha),
}
and hence $\alpha$ is the unique minimizer of $F_0$ in the affine space $\beta+\chi+\rd\Omega^{\frac{n-3}{2}}$. Now, if $\alpha_\epsilon\rightharpoonup\bar{\alpha}$, then the weak lower semi-continuity gives
\eq{
    F_0(\bar{\alpha}) \leq \liminf_{\epsilon\to0}F_0(\alpha_\epsilon) \leq \limsup_{\epsilon\to0}F_\epsilon(\alpha_\epsilon) \leq \lim_{\epsilon\to0}F_\epsilon(\alpha) = F_0(\alpha),
}
where the last inequality holds since $\alpha_\epsilon$ minimizes $F_\epsilon$. Hence the uniqueness of $\alpha$ gives $\bar{\alpha}=\alpha$. Therefore, passing to a subsequence as $\epsilon\to0$ in \eqref{eq:A5}, we see that $\rd\phi_\epsilon\rightharpoonup\rd\phi\in W^{1,2}$, and hence $\alpha \in W^{1,2}$.

If $n=3$, then \eqref{C5} becomes
\eq{
    \rd^*\Big( (\epsilon^2+\abs{\alpha_\epsilon}^2)^{\frac{1}{2}}\alpha_\epsilon \Big) = 0,
}
where $\alpha_\epsilon$ is a $1$-form. A direct algebraic computation shows
\eq{\label{C8}
    \abs{\rd^*\rd\phi_\epsilon} = \abs{\rd^*\alpha_\epsilon} \leq \frac{1}{\sqrt{3}}\abs{\nabla\alpha_\epsilon}.
}
Replacing \eqref{C6} by \eqref{C8} and repeating the same argument again gives $\alpha\in W^{1,2}$.

\smallskip
\noindent\emph{Step 3: higher regularity away from the zero set.}
On $\{\alpha\neq0\}$ the system 
\eq{
    \rd\alpha = \frac{n+1}{2}\abs{\alpha}^{\frac{2}{n-1}}*\alpha, \qquad \rd^*\big( \abs{\alpha}^{\frac{2}{n-1}}\alpha \big) = 0
}
is locally uniformly elliptic, so standard elliptic regularity implies $\alpha\in C^\infty$.
\end{proof}

\printbibliography

\end{document}